% -----------------------------------------------------------------------------
% -*-TeX-*- -*-Soft-*- Soft Wrapping
% -----------------------------------------------------------------------------
% AMS-LaTeX Paper ****************************************
% -----------------------------------------------------------------------------
% Submitted:      Journal of the Korean Mathematical Society in October 26th, 2018
% Final Version:  10/26/2018
% Accepted:       ???? ????, 20??
% -----------------------------------------------------------------------------
% This is a journal top-matter template file for use with AMS-LaTeX.
% Last Modified on 10/26/2018
%%%%%%%%%%%%%%%%%%%%%%%%%%%%%%%%%%%%%%%%%%%%%%%%%%%%%%%%%%%%%%%%%%%%%%%%%%%%%%%

\documentclass[10pt,reqno]{amsart}

% Over-full v-boxes are due to the \v{c} in author's name
\vfuzz2pt % Don't report small over-full v-boxes

% THEOREM Environments --------------------------------------------------------
\usepackage{amssymb}
\usepackage{amsmath}
\usepackage{amsfonts}
\usepackage{amscd}
\usepackage{mathrsfs}

\newcommand{\B}{\mathcal{B}}
\newcommand{\C}{\mathbf{C}}
\newcommand{\E}{\mathcal{E}}
\newcommand{\Ic}{\mathtt{i}\hspace{0.2mm}}
\newcommand{\M}{\mathscr{M}}
\newcommand{\N}{\mathscr{H}}
\newcommand{\X}{\mathscr{X}}

\setlength{\topmargin}{0.36in}
\setlength{\textheight}{8.72in}
\setlength{\textwidth}{5.68in}
\setlength{\oddsidemargin}{0.46in}
\setlength{\evensidemargin}{0.46in}

\newtheorem{theorem}{Theorem}[section]
\newtheorem{corollary}[theorem]{Corollary}
\newtheorem{definition}[theorem]{Definition}

\newtheorem{proposition}[theorem]{Proposition}

\newtheorem{remark}[theorem]{Remark}
\numberwithin{equation}{section}

\newcommand{\br}[1]{\left[#1\right]}
\newcommand{\bre}[1]{\left\{#1\right\}}
\newcommand{\n}[1]{\left\vert#1\right\vert}

\newcommand{\pr}[1]{\left(#1\right)}

% -----------------------------------------------------------------------------

\begin{document}

\title[]{On the value distribution of the Riemann zeta-function and the Euler gamma-function}

\author[]{Qi Han$^{1,\dag}$, Jingbo Liu$^{1,2}$, and Qiong Wang$^{3,4}$}

\address{$^1$ Department of Mathematics, Texas A\&M University, San Antonio, Texas 78224, USA
\vskip 2pt $^2$ Department of Mathematics, Wesleyan University, Middletown, Connecticut 06459, USA
\vskip 2pt \hspace{1.4mm} Email: {\sf qhan@tamusa.edu (QH) \hspace{0.6mm} jliu@tamusa.edu (JL) \hspace{0.6mm} jliu02@wesleyan.edu (JL)}
\vskip 2pt $^3$ Department of Mathematics, Shandong University, Jinan, Shandong 250100, P.R. China
\vskip 2pt $^4$ Department of Mathematics, University of California, Irvine, California 92697, USA
\vskip 2pt \hspace{1.4mm} Email: {\sf qiongwangsdu@126.com (QW) \hspace{0.6mm} qiongw11@uci.edu (QW)}}

\thanks{$^\dag$Qi Han is the corresponding author of this research work.}
\thanks{{\sf 2010 Mathematics Subject Classification.} 30D30, 30D35, 11M06, 33B15.}
\thanks{{\sf Keywords.} Euler gamma-function, Riemann zeta-function, value distribution, uniqueness.}

%\thanks{This work of author was partially supported by NSFC of China (No. 11461070, 11271227), PCSIRT (No. IRT1264), and the Fundamental Research Funds of Shandong University (No. 2017JC019)}
%\date{}
%\dedicatory{}
%\commby{}
% -----------------------------------------------------------------------------

\begin{abstract}
  We prove some uniqueness results for the Riemann zeta-function and the Euler gamma-function by virtue of shared values using the value distribution theory.
\end{abstract}

% -----------------------------------------------------------------------------
\maketitle
% -----------------------------------------------------------------------------

% +++ below is section 1 %%%%%%%%%%%%%%%%%%%%%%%%%%%%%%%%%%%%%%%%%%%%%%%%%%%%%%%%%%%%%%%%%%%%%%%%%%%%%%%%%%%%%%%%%%%%%%%%%%%%%%%%%%%%%%%%%%%%%%%%%%%
% &&& %%%%%%%%%%%%%%%%%%%%%%%%%%%%%%%%%%%%%%%%%%%%%%%%%%%%%%%%%%%%%%%%%%%%%%%%%%%%%%%%%%%%%%%%%%%%%%%%%%%%%%%%%%%%%%%%%%%%%%%%%%%%%%%%%%%%%%%%%%%%%%
\section{Introduction}\label{Int} % use lowercase except for proper name,s
Recall the Riemann zeta-function $\zeta$ is originally defined through the Dirichlet series
\begin{equation}
\zeta(s)=\sum_{n=1}^{+\infty}\frac{1}{n^s},\nonumber
\end{equation}
with $\Re(s)>1$, that can be analytically continued to be a meromorphic function in the complex variable $s=\sigma+\Ic t\in\C$ having only a simple pole at $s=1$.
The famous, yet unproven, Riemann hypothesis asserts that all the non-trivial zeros of $\zeta$ lie on the line $\Re(s)=\sigma=\frac{1}{2}$.

Value distribution of the Riemann zeta-function has been studied extensively.
See the classic by Titchmarsh \cite{Ti} and a recent monograph from Steuding \cite{St}; results more closely related to Nevanlinna theory can be found in Liao-Yang \cite{LY} and Ye \cite{Ye}.
It is well-known by Nevanlinna \cite{Ne1} that a non-constant meromorphic function $f$ in $\C$ is completely determined by ``$5$ IM'' value sharing condition (ignoring multiplicity), whereas $f$ is completely determined by ``$3$ CM'' value sharing condition (counting multiplicity) when $f$ is further assumed to be of finite non-integral order.
Han \cite{Ha} recently proved the mixed ``$1$ CM + $3$ IM'' value sharing condition sufficient if $f$ has finite non-integral order, or $f$ has integral order yet is of maximal growth type; this result particularly applies to the Euler gamma-function and the Riemann zeta-function.

In this paper, we discuss some uniqueness results primarily about the Riemann zeta-function and the Euler gamma-function in light of their nice properties.
Specifically, we will prove some results for the Riemann zeta-function that extends Gao-Li \cite{GL} in section 2, and will prove some results for the Euler gamma-function that extends Liao-Yang \cite{LY} in section 3; finally, in section 4, we will revisit the Riemann zeta-function and discuss some other related results.

Below, we assume the reader is familiar with the basics of Nevanlinna theory of meromorphic functions in $\C$ such as the first and second main theorems, and the standard notations such as the characteristic function $T(r,f)$, the proximity function $m(r,f)$, and the integrated counting functions $N(r,f)$ (counting multiplicity) and $\bar{N}(r,f)$ (ignoring multiplicity).
$S(r,f)$ denotes a quantity satisfying $\displaystyle{\displaystyle{S(r,f)=o(T(r,f))}}$ as $r\to+\infty$, since $f$ is (always assumed to be) of finite order.
Here, the order of $f$ is defined as $\displaystyle{\rho(f):=\limsup\limits_{r\to+\infty}\frac{\log T(r,f)}{\log r}}$.
Excellent references of this theory are certainly Nevanlinna \cite{Ne2}, Hayman \cite{Hay}, Yang \cite{Ya}, and Cherry-Ye \cite{CY} etc.

% +++ below is section 2 %%%%%%%%%%%%%%%%%%%%%%%%%%%%%%%%%%%%%%%%%%%%%%%%%%%%%%%%%%%%%%%%%%%%%%%%%%%%%%%%%%%%%%%%%%%%%%%%%%%%%%%%%%%%%%%%%%%%%%%%%%%
% &&& %%%%%%%%%%%%%%%%%%%%%%%%%%%%%%%%%%%%%%%%%%%%%%%%%%%%%%%%%%%%%%%%%%%%%%%%%%%%%%%%%%%%%%%%%%%%%%%%%%%%%%%%%%%%%%%%%%%%%%%%%%%%%%%%%%%%%%%%%%%%%%
\section{Results regarding the Riemann zeta-function I}\label{zeta1} % use lowercase except for proper name,s
The first result that we shall need is essentially due to Levinson \cite {Le}, while for convenience of the reader we reproduce it as the following proposition.

\begin{proposition}\label{Pr1}
Given $a\in\C$, $\zeta(s)-a$ has infinitely many zeros in the strips
\begin{equation}
Z_V:=\bre{s:\frac{1}{4}<\sigma<\frac{3}{4},\,t>0}\hspace{2mm}and\hspace{2mm}Z_H:=\bre{s:-2<t<2,\,\sigma<0},\nonumber
\end{equation}
respectively, where $t\to+\infty$ in $Z_V$ and $\sigma\to-\infty$ in $Z_H$.
\end{proposition}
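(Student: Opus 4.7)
The plan is to treat the two strips separately, each time reducing the claim to a counting statement for $a$-values of the analytic continuation of $\zeta$.

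For the vertical strip $Z_V$, I would appeal to the density theorem of Levinson \cite{Le} on $a$-values of the Riemann zeta-function. A standard application of the argument principle to $\zeta(s) - a$ on a large rectangle yields the Riemann--von Mangoldt type formula
\[
N_a(T) := \#\bre{s : \zeta(s) = a,\,0 < t < T} = \frac{T}{2\pi}\log\frac{T}{2\pi} - \frac{T}{2\pi} + O(\log T),
\]
whereas the Bohr--Landau/Levinson bound shows that the number of such $a$-values with $\n{\sigma - 1/2} \geq 1/4$ is $O(T)$ as $T\to +\infty$. Subtracting, the number of $a$-values of $\zeta$ in the rectangle $\bre{1/4 < \sigma < 3/4,\,0 < t < T}$ has the same leading term $(T/2\pi)\log(T/2\pi)$; in particular, it tends to $+\infty$, producing infinitely many zeros of $\zeta(s) - a$ in $Z_V$ with $t \to +\infty$.

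For the horizontal strip $Z_H$, I would exploit the functional equation $\zeta(s) = \chi(s)\zeta(1 - s)$ with $\chi(s) = 2^s \pi^{s - 1} \sin(\pi s/2)\Gamma(1 - s)$. For $s = \sigma + \Ic t$ with $\sigma \to -\infty$ and $\n{t} < 2$, the Dirichlet series $\zeta(1 - s) = \sum_{n\ge 1} n^{s - 1}$ converges absolutely and satisfies $\zeta(1 - s) = 1 + O(2^{\sigma - 1})$. Hence
\[
\zeta(s) - a = \pr{\chi(s) - a} + \chi(s)\pr{\zeta(1 - s) - 1}.
\]
By Stirling's formula, $\n{\chi(\sigma + \Ic t)}$ grows faster than any power of $\n{\sigma}$, while $\arg\chi(s)$ varies by an unbounded amount as $\sigma \to -\infty$. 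Applying the argument principle to $\chi(s) - a$ on the successive rectangles $R_X := \bre{-X - 1 < \sigma < -X,\,-2 < t < 2}$, I would show that the variation of $\arg(\chi(s) - a)$ along $\partial R_X$ grows without bound in $X$, producing arbitrarily many zeros of $\chi(s) - a$ in $Z_H$. A Rouch\'e comparison, using that $\n{\chi(s)}\n{\zeta(1 - s) - 1} = O(2^\sigma \n{\chi(s)})$ is exponentially smaller than $\n{\chi(s) - a}$ on small circles surrounding these zeros, then transfers them to zeros of $\zeta(s) - a$, completing the case $Z_H$.

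The principal obstacle is the second part: one has to choose the contours $R_X$ and the Rouch\'e circles carefully so that $\chi(s) - a$ stays bounded below on them, steering clear of the trivial zeros $s = -2, -4, \ldots$ of $\chi$; this is handled via Stirling's asymptotics for $\Gamma(1 - s)$ in the narrow strip $\n{t} < 2$ combined with the oscillation of $\sin(\pi s/2)$. The first part is more routine once Levinson's \cite{Le} density statement is invoked.
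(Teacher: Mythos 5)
The paper does not actually prove this proposition: it states it as ``essentially due to Levinson'' and simply cites \cite{Le}, so there is no in-paper argument to compare against and your write-up supplies a proof where the authors supply only a reference. Your two-part strategy is the standard one and is sound. For $Z_V$, the Riemann--von Mangoldt count of $a$-points together with the Bohr--Landau/Levinson bound $O(T)$ for $a$-points with $|\sigma-\tfrac12|\geq\tfrac14$ does force infinitely many zeros of $\zeta(s)-a$ with $\tfrac14<\sigma<\tfrac34$ and $t\to+\infty$. For $Z_H$, reducing to $\chi(s)$ via the functional equation and transferring zeros by Rouch\'e is correct in outline: the $a$-points of $\zeta$ with $\sigma\to-\infty$ cluster near the even negative integers.

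One detail in the $Z_H$ part is misstated. For the unit-width rectangles $R_X$ you chose, the variation of $\arg(\chi(s)-a)$ along $\partial R_X$ does \emph{not} grow without bound in $X$: since $|\chi(s)|$ is enormous off the real axis there, the solutions of $\chi(s)=a$ are forced close to the zeros of $\sin(\pi s/2)$, and each $R_X$ contains at most one even negative integer, hence (for large $X$) at most one such solution; the argument variation is therefore $O(1)$ per rectangle. What is true, and what suffices, is that infinitely many of the $R_X$ each contribute at least one zero, so the total count as $X\to+\infty$ is infinite. A cleaner route avoids $\chi$ and the contour bookkeeping entirely: by the functional equation and Stirling, $\min_{|s+2n|=1}|\zeta(s)|\to+\infty$ as $n\to+\infty$, so for all large $n$ Rouch\'e applied directly to $\zeta$ and $\zeta-a$ on the circle $|s+2n|=1$ produces a zero of $\zeta(s)-a$ within distance $1$ of the trivial zero $-2n$; these lie in $Z_H$ with $\sigma\to-\infty$. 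With that adjustment your argument is complete.
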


Now, we can formulate our main results of this section as follows.

\begin{theorem}\label{Th1}
Let $a,b,c\in\C$ be finite and distinct, and let $f$ be a meromorphic function in $\C$ such that either $\displaystyle{\limsup\limits_{r\to+\infty}\frac{\bar{N}\big(r,\frac{1}{f-c}\big)}{T(r,f)}<1}$ or $\displaystyle{\bar{N}\Big(r,\frac{1}{f-c}\Big)=O(T(r,\zeta))}$ with $\zeta$ being the Riemann zeta-function.
When $f,\zeta$ share the finite values $a,b$ counting multiplicity, with $\displaystyle{\zeta^{-1}(c)\subseteq f^{-1}(c)}$ ignoring multiplicity, except possibly at finitely many points, then $f=\zeta$.
\end{theorem}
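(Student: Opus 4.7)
The plan is to introduce the auxiliary function
\[
\phi(s)\;:=\;\frac{\pr{\zeta(s)-a}\pr{f(s)-b}}{\pr{\zeta(s)-b}\pr{f(s)-a}}
\]
and to prove $\phi\equiv 1$, which by direct algebra is equivalent to $\zeta\equiv f$. First I would verify that $\phi$ is entire and nowhere zero outside a finite set: at a common $a$- or $b$-point the numerator and denominator have matching orders (by CM sharing), the pole of $\zeta$ at $s=1$ and each pole of $f$ are harmless because $(\zeta-a)/(\zeta-b)$ and $(f-b)/(f-a)$ both tend to $1$ there, and the CM sharing forbids any pole of $f$ from coinciding with a zero of $\zeta-a$ or $\zeta-b$. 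Collecting the finitely many exceptional points into a rational prefactor and using that $\zeta,f$ have finite order, Hadamard's theorem yields $\phi=R\,e^{h}$ with $R$ rational and $h$ a polynomial; I would also record the bound $T(r,e^{h})\le T(r,\zeta)+T(r,f)+O(\log r)$, obtained from $T(r,\tfrac{\zeta-a}{\zeta-b})=T(r,\zeta)+O(1)$ and its $f$-analogue.

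\textbf{Third value and elimination of high-degree $h$.} The inclusion $\zeta^{-1}(c)\subseteq f^{-1}(c)$ forces $\phi(s_0)=1$ at every $s_0\in\zeta^{-1}(c)\setminus(\text{finite set})$, since $\zeta(s_0)=f(s_0)=c$; equivalently, $Re^{h}-1$ vanishes on this infinite set. If $\deg h=0$, then $Re^{h}$ is rational and equals $1$ at infinitely many points, hence identically $1$, and $\zeta\equiv f$. If $\deg h\ge 2$, then $T(r,e^{h})\asymp r^{\deg h}$ combined with the above bound and with $T(r,\zeta)=O(r\log r)$ forces $T(r,f)\asymp r^{\deg h}$, so $T(r,\zeta)=o(T(r,f))$. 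The second main theorem applied to $f$ at $a,b,c,\infty$, combined with the CM equalities $\bar N(r,\tfrac{1}{f-a})=\bar N(r,\tfrac{1}{\zeta-a})+O(\log r)$ and $\bar N(r,\tfrac{1}{f-b})=\bar N(r,\tfrac{1}{\zeta-b})+O(\log r)$ and with whichever of the two hypotheses on $\bar N(r,\tfrac{1}{f-c})$ holds, then yields $\bar N(r,f)\ge(1+\eta+o(1))T(r,f)$ for some $\eta>0$, contradicting $\bar N(r,f)\le T(r,f)$.

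\textbf{The decisive case $\deg h=1$, via Proposition~\ref{Pr1}.} The main obstacle is the linear case, where $T(r,e^{h})\asymp r$ is subordinate to $T(r,\zeta)$ and Nevanlinna estimates alone are insufficient. Writing $h(s)=\alpha s+\beta$ with $\alpha=\alpha_1+\Ic\alpha_2\ne 0$ and letting $d$ denote the difference of the degrees of the numerator and denominator of $R$, taking moduli in $R(s_0)e^{h(s_0)}=1$ and using $\log|R(s)|=d\log|s|+O(1)$ as $|s|\to\infty$ gives
\[
\alpha_1\sigma_0-\alpha_2 t_0+d\log|s_0|\;=\;O(1)
\]
along $\zeta^{-1}(c)\setminus(\text{finite set})$, where $s_0=\sigma_0+\Ic t_0$. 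Applying this along the infinite sequence in $Z_V$ from Proposition~\ref{Pr1} ($\sigma_0$ bounded, $t_0\to+\infty$, $|s_0|\sim t_0$), a comparison of linear and logarithmic growth forces $\alpha_2=0$ and $d=0$; applying it along the infinite sequence in $Z_H$ ($t_0$ bounded, $\sigma_0\to-\infty$, $|s_0|\sim-\sigma_0$) then forces $\alpha_1=0$. Hence $\alpha=0$, contradicting $\deg h=1$. This geometric step, which isolates the two \emph{incompatible} asymptotic directions of $\zeta^{-1}(c)$ supplied by Proposition~\ref{Pr1}, is the crux of the argument and is where the specific structure of the Riemann zeta-function enters decisively.
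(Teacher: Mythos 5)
Your auxiliary function, the reduction to $\phi=Re^{h}$, and the use of Proposition \ref{Pr1} along the two strips $Z_V$ and $Z_H$ to kill a linear exponent are exactly the paper's argument, and your treatment of the cases $\deg h=0$ and $\deg h=1$ is correct. However, there is a genuine gap at the first step: you write ``using that $\zeta,f$ have finite order, Hadamard's theorem yields $\phi=Re^{h}$ with $h$ a polynomial.'' The theorem does not assume $f$ has finite order; $f$ is only a meromorphic function subject to one of the two growth conditions on $\bar{N}\big(r,\frac{1}{f-c}\big)$. Without a bound on $\rho(f)$ you only get $\phi=Re^{h}$ with $h$ an arbitrary entire function, and your trichotomy on $\deg h$ --- in particular the second-main-theorem contradiction for $\deg h\ge 2$, which starts from $T(r,e^{h})\asymp r^{\deg h}$ --- never gets off the ground. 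As written the argument is circular: the polynomial form of $h$ is used to set up the growth comparison that is supposed to control $h$.

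The repair is to deploy your second-main-theorem estimate \emph{before} invoking Hadamard, which is precisely what the paper does: applying the second main theorem to $f$ with the three values $a,b,c$ and using the CM sharing to replace $\bar{N}\big(r,\frac{1}{f-a}\big)$ and $\bar{N}\big(r,\frac{1}{f-b}\big)$ by $T(r,\zeta)$ up to $O(\log r)$, either hypothesis on $\bar{N}\big(r,\frac{1}{f-c}\big)$ yields $T(r,f)\le O(T(r,\zeta))+O(\log r)=O(r\log r)$ outside a set of finite measure, hence $\rho(f)\le 1$ and $\rho(\phi)\le 1$. Hadamard then gives $\deg h\le 1$ directly, so your $\deg h\ge 2$ case becomes moot, and only the analysis you carry out correctly (and essentially identically to the paper, your bookkeeping of the degree $d$ of $R$ being a mild refinement) remains. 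With that reordering the proof is complete.
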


\begin{proof}
Consider an auxiliary function
\begin{equation}\label{Eq-1}
F:=\frac{\zeta-a}{f-a}\cdot\frac{f-b}{\zeta-b}.
\end{equation}
Since $f,\zeta$ share $a,b$ CM except possibly at finitely many points, one knows $F$ is a meromorphic function having only finitely many zeros and poles.
Thus, there exists an entire function $g$ and a rational function $R$ such that
\begin{equation}\label{Eq-2}
F=\frac{\zeta-a}{f-a}\cdot\frac{f-b}{\zeta-b}=R\hspace{0.2mm}e^g.
\end{equation}

We next claim $g$ is linear.
In fact, from \cite[Lemma 2.7]{LY} or \cite[Theorem 1]{Ye}, one has
\begin{equation}\label{Eq-3}
T(r,\zeta)=\frac{1}{\pi}\,r\log r+O(r)
\end{equation}
so that $\rho(\zeta)=1$.
Using Nevanlinna's first and second main theorems, we observe that
\begin{equation}\label{Eq-4}
\begin{aligned}
T(r,f)&\leq\bar{N}\Big(r,\frac{1}{f-a}\Big)+\bar{N}\Big(r,\frac{1}{f-b}\Big)+\bar{N}\Big(r,\frac{1}{f-c}\Big)+S(r,f)\\
      &=\bar{N}\Big(r,\frac{1}{\zeta-a}\Big)+\bar{N}\Big(r,\frac{1}{\zeta-b}\Big)+\bar{N}\Big(r,\frac{1}{f-c}\Big)+O(\log r)+S(r,f)\\
      &\leq2\hspace{0.2mm}T(r,\zeta)+\bar{N}\Big(r,\frac{1}{f-c}\Big)+O(\log r)+S(r,f).
\end{aligned}
\end{equation}
Now, when $\displaystyle{\iota:=\limsup\limits_{r\to+\infty}\frac{\bar{N}\big(r,\frac{1}{f-c}\big)}{T(r,f)}<1}$ holds, one has from \eqref{Eq-4} that
\begin{equation}\label{Eq-5}
\pr{1-\frac{1+\iota}{2}-o(1)}T(r,f)\leq2\hspace{0.2mm}T(r,\zeta)+O(\log r);
\end{equation}
while when $\displaystyle{\bar{N}\Big(r,\frac{1}{f-c}\Big)=O(T(r,\zeta))}$ holds, one sees from \eqref{Eq-4} that
\begin{equation}\label{Eq-6}
\pr{1-o(1)}T(r,f)\leq O(T(r,\zeta))+O(\log r).
\end{equation}
Therefore, via \eqref{Eq-3}, one derives that $\displaystyle{T(r,f)\leq O(r\log r)+O(r)}$ (for all $r$ outside of a possible set of finite Lebesgue measure), which implies $\rho(f)\leq1$ (so there is no exceptional set).
Hence, it is routine to note $\rho(F)\leq1$ as well.
Through the renowned Hadamard factorization theorem (see Berenstein-Gay \cite[Section 4.6.15]{BG}), we know $g$ is linear as claimed earlier.

We further assert $g\equiv0$; otherwise, suppose that $g=As$ where $A\neq0$ is a complex number.
Recall $\displaystyle{\zeta^{-1}(c)\subseteq f^{-1}(c)}$ except possibly at finitely many points.
Via Proposition \ref{Pr1}, there are infinitely many zeros of $\zeta-c$ in both the strips $Z_V$ and $Z_H$ (since $c\neq\infty$).
Note these zeros of $\zeta-c$ are zeros of $R\hspace{0.2mm}e^{As}-1$.
Denote these zeros by $\varpi_n=\beta_n+\Ic\gamma_n$ in the vertical strip $Z_V$ and $\omega_n=\mu_n+\Ic\nu_n$ in the horizontal strip $Z_H$, respectively.
Obviously, $\gamma_n\to+\infty$ and $\mu_n\to-\infty$ when $n\to+\infty$, and $\beta_n$'s and $\nu_n$'s are uniformly bounded.
Write $A=B+\Ic C$ with $B,C$ real numbers.
Then, we get
\begin{equation}\label{Eq-7}
1\equiv R(\varpi_n)\hspace{0.2mm}e^{A\varpi_n}=R(\beta_n+\Ic\gamma_n)\hspace{0.2mm}e^{(B+\Ic C)(\beta_n+\Ic\gamma_n)},
\end{equation}
\begin{equation}\label{Eq-8}
1\equiv R(\omega_n)\hspace{0.2mm}e^{A\hspace{0.2mm}\omega_n}=R(\mu_n+\Ic\nu_n)\hspace{0.2mm}e^{(B+\Ic C)(\mu_n+\Ic\nu_n)}.
\end{equation}
We consider two different cases.\\
{\bf Case 1.} $B\neq0$.
Then, we apply \eqref{Eq-8} to deduce
$\displaystyle{1\equiv\n{R(\omega_n)}e^{B\mu_n-C\nu_n}\to\left\{\begin{array}{ll}
0 &\mathrm{if}\hspace{2mm}B>0\medskip\\
+\infty &\mathrm{if}\hspace{2mm}B<0
\end{array}\right.}$ with $\mu_n\to-\infty$ and bounded $\nu_n$'s, since $R(s)$ is rational and $e^{As}$ is exponential.\\
{\bf Case 2.} $B=0$.
If $C\neq0$, we then use \eqref{Eq-7} to show
$\displaystyle{1\equiv\n{R(\varpi_n)}e^{-C\gamma_n}\to\left\{\begin{array}{ll}
0 &\mathrm{if}\hspace{2mm}C>0\medskip\\
+\infty &\mathrm{if}\hspace{2mm}C<0
\end{array}\right.}$ with $\gamma_n\to+\infty$ and bounded $\beta_n$'s, since $R(s)$ is rational and $e^{As}$ is exponential.

As a result, $B=C=0$ so that the assertion $g\equiv0$ is proved, which leads to
\begin{equation}\label{Eq-9}
F=\frac{\zeta-a}{f-a}\cdot\frac{f-b}{\zeta-b}=R.
\end{equation}
As $\displaystyle{\zeta^{-1}(c)\subseteq f^{-1}(c)}$ except possibly at finitely many points, and $\zeta-c=0$ has infinitely many roots, we must have $R(s)\equiv1$ noticing $R$ is rational.
It then follows that $f=\zeta$.
\end{proof}

\begin{theorem}\label{Th2}
Let $a\neq c\in\C$ be finite, and let $f$ be a meromorphic function in $\C$ having only finitely many poles such that either $\displaystyle{\limsup\limits_{r\to+\infty}\frac{\bar{N}\big(r,\frac{1}{f-c}\big)}{T(r,f)}<1}$ or $\displaystyle{\bar{N}\Big(r,\frac{1}{f-c}\Big)=O(T(r,\zeta))}$ where $\zeta$ is the Riemann zeta-function.
When $f,\zeta$ share $a$ counting multiplicity, with $\displaystyle{\zeta^{-1}(c)\subseteq f^{-1}(c)}$ ignoring multiplicity, except possibly at finitely many points, then $f=\zeta$.
\end{theorem}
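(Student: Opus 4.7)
The plan is to adapt the proof of Theorem~\ref{Th1} by replacing the role of the second shared value $b$ with the hypothesis that $f$ has only finitely many poles; morally, we are substituting a ``shared value at $\infty$'' (counting multiplicity, since $\zeta$ also has only finitely many poles) for sharing $b$.

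First, I would introduce the simpler auxiliary function
\begin{equation}
F:=\frac{\zeta-a}{f-a}.\nonumber
\end{equation}
Since $f$ and $\zeta$ share $a$ CM up to finitely many exceptional points, the zeros of $\zeta-a$ cancel the zeros of $f-a$ and vice versa, except at those exceptions. The poles of $F$ come only from the unique (simple) pole of $\zeta$ at $s=1$ together with the finitely many poles of $f$ and the finitely many sharing exceptions. Hence $F$ has only finitely many zeros and poles, and so
\begin{equation}
F=\frac{\zeta-a}{f-a}=R\hspace{0.2mm}e^g\nonumber
\end{equation}
for some rational $R$ and entire $g$.

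Next I would show $g$ is linear. Applying Nevanlinna's second main theorem with the three target values $a, c, \infty$ yields
\begin{equation}
T(r,f)\leq\bar{N}\Big(r,\tfrac{1}{f-a}\Big)+\bar{N}\Big(r,\tfrac{1}{f-c}\Big)+\bar{N}(r,f)+S(r,f).\nonumber
\end{equation}
The CM sharing of $a$ gives $\bar{N}(r,1/(f-a))\leq T(r,\zeta)+O(\log r)$, and the hypothesis of finitely many poles gives $\bar{N}(r,f)=O(\log r)$. Combined with either of the two assumptions on $\bar{N}(r,1/(f-c))$ and the growth bound \eqref{Eq-3}, this yields $T(r,f)=O(r\log r)$, so $\rho(f)\leq 1$ and hence $\rho(F)\leq 1$. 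The Hadamard factorization theorem then forces $g$ to be linear, say $g=As$.

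Assuming $A\neq0$, I would then exploit $\zeta^{-1}(c)\subseteq f^{-1}(c)$ just as in Theorem~\ref{Th1}: at every zero $s_n$ of $\zeta-c$ outside the finite exceptional set, both $\zeta(s_n)=c$ and $f(s_n)=c$, so $F(s_n)=(c-a)/(c-a)=1$, giving $R(s_n)e^{As_n}=1$. Invoking Proposition~\ref{Pr1}, there are infinitely many such $s_n$ in both $Z_V$ (with $t\to+\infty$) and $Z_H$ (with $\sigma\to-\infty$); running the very same two-case analysis on $A=B+\Ic C$ as in Theorem~\ref{Th1} derives a contradiction from the rational-versus-exponential growth comparison. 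Therefore $g\equiv 0$ and $F=R$. Because $R\equiv 1$ at infinitely many points $s_n$ and $R$ is rational, $R\equiv 1$ identically, and thus $f=\zeta$. I do not expect any genuine obstacle here; the only delicate point is verifying that the pole-count $\bar{N}(r,f)=O(\log r)$ really does replace the sharing of $b$ in the second-main-theorem estimate that bounds $\rho(f)$.
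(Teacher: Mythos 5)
Your proposal is correct and is essentially the paper's own proof: the paper likewise sets $F_1=\frac{\zeta-a}{f-a}$, observes it has only finitely many zeros and poles, and then invokes ``exactly the same analyses'' as Theorem~\ref{Th1}, which is precisely the second-main-theorem estimate with $\infty$ replacing $b$ (so that $\bar{N}(r,f)=O(\log r)$) that you spell out explicitly. (One trivial bookkeeping point: the finitely many poles of $f$ contribute \emph{zeros} of $F$ rather than poles, but this does not affect the conclusion that $F$ has finitely many zeros and poles, nor anything downstream.)
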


\begin{proof}
Recall $\zeta$ has its unique simple pole at $s=1$.
Consider an auxiliary function
\begin{equation}\label{Eq-10}
F_1:=\frac{\zeta-a}{f-a}.
\end{equation}
In view of the assumptions that $f,\zeta$ share $a$ CM except possibly at finitely many points and $f$ has only finitely many poles, $F_1$ is a meromorphic function with finitely many zeros and poles.
Hence, there is an entire function $g$ and a rational function $R$ such that
\begin{equation}\label{Eq-11}
F_1=\frac{\zeta-a}{f-a}=R\hspace{0.2mm}e^g.
\end{equation}

Next, one can exploit exactly the same analyses as described in the proof of Theorem \ref{Th1} to deduce that $g\equiv0$.
We then have
\begin{equation}\label{Eq-12}
F_1=\frac{\zeta-a}{f-a}=R.
\end{equation}
As $\displaystyle{\zeta^{-1}(c)\subseteq f^{-1}(c)}$ except possibly at finitely many points, and $\zeta-c=0$ has infinitely many roots, we must have $R(s)\equiv1$ noticing $R$ is rational.
It then follows that $f=\zeta$.
\end{proof}

\begin{remark}\label{Re1}
Li \cite{Li1} considered the uniqueness of an $L$-function in the extended Selberg class regarding a general meromorphic function in $\C$ having only finitely many poles, and proved the the combined {\rm``$1$ CM + $1$ IM''} value sharing condition sufficient.
Extensions of this nice result were given by Garunk\v{s}tis-Grahl-Steuding \cite{GGS} and Han \cite{Ha}.
On the other hand, when we focus on the Riemann zeta-function $\zeta$, results using 3 general value sharing condition were discussed by Gao-Li \cite{GL} which answered an open question of Liao-Yang \cite{LY}.
In \cite{GL}, the authors also provided an example to show that $\displaystyle{\zeta^{-1}(c)\subseteq f^{-1}(c)}$ ignoring multiplicity is not adequate in proving $f=\zeta$ besides the other hypothesis that $f,\zeta$ share $0,1$ {\rm CM}, with $\displaystyle{f=-\frac{\zeta}{\big(e^{(\zeta-c)(s-1)}-1\big)\zeta-e^{(\zeta-c)(s-1)}}}$ and $c\neq0,1$.
For this $f$, one has $\displaystyle{T(r,f)=\bar{N}\Big(r,\frac{1}{f-c}\Big)+O(1)}$ and $\displaystyle{\liminf\limits_{r\to+\infty}\frac{\bar{N}\big(r,\frac{1}{f-c}\big)}{T(r,\zeta)}=+\infty}$; this observation leads to the growth constraints about $f-c$ in Theorems \ref{Th1} and \ref{Th2}.
\end{remark}

% +++ below is section 3 %%%%%%%%%%%%%%%%%%%%%%%%%%%%%%%%%%%%%%%%%%%%%%%%%%%%%%%%%%%%%%%%%%%%%%%%%%%%%%%%%%%%%%%%%%%%%%%%%%%%%%%%%%%%%%%%%%%%%%%%%%%
% &&& %%%%%%%%%%%%%%%%%%%%%%%%%%%%%%%%%%%%%%%%%%%%%%%%%%%%%%%%%%%%%%%%%%%%%%%%%%%%%%%%%%%%%%%%%%%%%%%%%%%%%%%%%%%%%%%%%%%%%%%%%%%%%%%%%%%%%%%%%%%%%%
\section{Results regarding the Euler gamma-function}\label{gamma} % use lowercase except for proper name,s
Denote $\Gamma(z)$ the Euler gamma-function.
Recall \cite[Section 1]{LY} and \cite[Theorem 2]{Ye}
\begin{equation}\label{Eq-13}
T(r,\Gamma)=\frac{1}{\pi}\,r\log r+O(r),
\end{equation}
\begin{equation}\label{Eq-14}
N(r,\Gamma)=r+O(\log r),
\end{equation}
\begin{equation}\label{Eq-15}
N\Big(r,\frac{1}{\Gamma-c}\Big)=\frac{1}{\pi}\,r\log r+O(r)\hspace{2mm}\mathrm{for}\hspace{2mm}c\neq0,\infty.
\end{equation}
This inspires us to introduce the following family of meromorphic functions in $\C$.

\begin{definition}\label{De1}
Let $\X_{p,q}$ be the family of meromorphic functions $\eta$ in $\C$ such that
\begin{equation}\label{Eq-16}
T(r,\eta)=K_1\,r^p\log^qr+O(r^p\log^{q-1}r),
\end{equation}
\begin{equation}\label{Eq-17}
\bar{N}\Big(r,\frac{1}{\eta-c}\Big)\geq K_2\,r^p\log^qr+O(r^p\log^{q-1}r).
\end{equation}
Here, $c\in\C$ is a finite value, $p,q\geq1$ are integers, and $K_1\geq K_2>0$ are real numbers.
\end{definition}

One observes $\zeta,\Gamma\in\X_{1,1}$.
In fact, by Conrey \cite[Theorem 1]{Co} and the Riemann-von Mangoldt formula \cite[Theorem 9.4]{Ti} (see the proof of \cite[Theorem 1.2]{GL}), it follows that
\begin{equation}\label{Eq-18}
\bar{N}\Big(r,\frac{1}{\zeta}\Big)\geq\frac{1}{6\hspace{0.2mm}\pi}\,r\log r+O(r),
\end{equation}
which together with \eqref{Eq-3} implies $\zeta\in\X_{1,1}$ for $c=0$, $K_1=\frac{1}{\pi}$ and $K_2=\frac{1}{6\pi}$; on the other hand, seeing that
\begin{equation}
\bar{N}\Big(r,\frac{1}{\Gamma-c}\Big)+N\Big(r,\frac{1}{\Gamma'}\Big)\geq N\Big(r,\frac{1}{\Gamma-c}\Big)+O(1)\nonumber
\end{equation}
and recalling that $0$ is the only Picard value of $\Gamma$ (yet both $0,\infty$ are Nevanlinna's defect values of $\Gamma$), one has
\begin{equation}
\begin{aligned}
&\,N\Big(r,\frac{1}{\Gamma'}\Big)=N\Big(r,\frac{1}{\Gamma'/\Gamma}\Big)\leq T\Big(r,\frac{\Gamma'}{\Gamma}\Big)+O(1)\\
\leq&\,N\Big(r,\frac{\Gamma'}{\Gamma}\Big)+m\Big(r,\frac{\Gamma'}{\Gamma}\Big)+O(1)\leq r+O(\log r)\nonumber
\end{aligned}
\end{equation}
in light of the lemma of logarithmic derivative and \eqref{Eq-14} so that
\begin{equation}\label{Eq-19}
\bar{N}\Big(r,\frac{1}{\Gamma-c}\Big)=\frac{1}{\pi}\,r\log r+O(r),
\end{equation}
which along with \eqref{Eq-13} yields $\Gamma\in\X_{1,1}$ for all $c\neq0,\infty$ and $K_1=K_2=\frac{1}{\pi}$ with equality.

Now, we can formulate our main result of this section as follows.

\begin{theorem}\label{Th3}
Assume $\eta\in\X_{p,q}$ is a meromorphic function associated with the numbers $c\in\C$, $p,q\geq1$, and $K_1\geq K_2>0$.
Let $\displaystyle{a\neq b\in\C\cup\bre{\infty}}$ be distinct from $c$, and let $f$ be a meromorphic function in $\C$ such that either $\displaystyle{\limsup\limits_{r\to+\infty}\frac{\bar{N}\big(r,\frac{1}{f-c}\big)}{T(r,f)}<1}$ or $\displaystyle{\bar{N}\Big(r,\frac{1}{f-c}\Big)=O(T(r,\eta))}$.
When $f,\eta$ share the values $a,b$ counting multiplicity, with $\displaystyle{\eta^{-1}(c)\subseteq f^{-1}(c)}$ ignoring multiplicity, except possibly at a set $\E$ of points with $\displaystyle{n(r,\E)=o(r^p\log^{q-1}r)}$, then $f=\eta$.
Here, $n(r,\E)$ denotes the counting function of $\E$, \textit{i.e.}, the number of points in the set $\displaystyle{\E\cap\bre{z\in\C:\n{z}<r}}$.
\end{theorem}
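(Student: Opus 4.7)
The overall strategy adapts the arguments of Theorems \ref{Th1} and \ref{Th2}, replacing the strip-density input of Proposition \ref{Pr1} (unavailable for a general $\eta\in\X_{p,q}$) by a quantitative estimate that exploits the gap between $r^p\log^qr$, the growth rate of $\bar{N}\!\left(r,\frac{1}{\eta-c}\right)$, and $r^p$, the trivial bound for $T(r,e^g)$ when $g$ is a polynomial of degree $\leq p$; this gap is genuine precisely because $q\geq 1$.

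I set up the auxiliary function
\begin{equation*}
F:=\frac{\eta-a}{f-a}\cdot\frac{f-b}{\eta-b},
\end{equation*}
with the natural modification dropping one factor if $a=\infty$ or $b=\infty$, as in \eqref{Eq-10}. Because $f,\eta$ share $a,b$ CM outside $\E$, every zero or pole of $F$ sits above a point of $\E$, so all such counting functions are dominated by $n(r,\E)=o(r^p\log^{q-1}r)$. Applying the second main theorem to $f$ at the three distinct targets $a,b,c$, replacing $\bar{N}(r,1/(f-a))$ and $\bar{N}(r,1/(f-b))$ by $\bar{N}(r,1/(\eta-a))$ and $\bar{N}(r,1/(\eta-b))$ up to $n(r,\E)$, yields
\begin{equation*}
T(r,f)\leq 2\hspace{0.2mm}T(r,\eta)+\bar{N}\Bigl(r,\tfrac{1}{f-c}\Bigr)+o(r^p\log^{q-1}r)+S(r,f),
\end{equation*}
and either hypothesis on $\bar{N}\!\left(r,\frac{1}{f-c}\right)$ then produces $T(r,f)=O(r^p\log^qr)$ by the same linear manipulations as in \eqref{Eq-5}--\eqref{Eq-6}. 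In particular $\rho(F)\leq p$, so Hadamard factorization (\cite[Section 4.6.15]{BG}) gives $F=H\hspace{0.2mm}e^g$ with $g$ a polynomial of degree $d\leq p$ and $H$ a ratio of canonical products whose zeros and poles are confined to $\E$.

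The decisive step is to combine two bounds: $T(r,H)=o(r^p\log^qr)$, obtained from $n(r,\E)=o(r^p\log^{q-1}r)$ via the classical maximum-modulus estimate for canonical products, and $T(r,e^g)=O(r^d)=O(r^p)=o(r^p\log^qr)$, where the last equality uses $q\geq 1$. Therefore $T(r,F)=o(r^p\log^qr)$. On the other hand, at every $s_0\in\eta^{-1}(c)\setminus\E$ one has $f(s_0)=\eta(s_0)=c$ and $c\neq a,b$, so direct substitution gives $F(s_0)=1$. Assuming $F\not\equiv 1$, the first main theorem yields
\begin{equation*}
T(r,F)\geq\bar{N}\Bigl(r,\tfrac{1}{F-1}\Bigr)-O(1)\geq K_2\,r^p\log^qr+O(r^p\log^{q-1}r),
\end{equation*}
contradicting the preceding bound. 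Hence $F\equiv 1$, and the identity $(\eta-a)(f-b)\equiv(f-a)(\eta-b)$ (respectively its truncated form in the $\infty$-cases) collapses to $f=\eta$.

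The main technical obstacle is the estimate $T(r,H)=o(r^p\log^qr)$. Because $p$ is an integer and the hypothesis only provides $n(r,\E)=o(r^p\log^{q-1}r)$, the exponent of convergence can equal $p$ and the genus must be taken as $p$; one then has to process the classical split $r^p\int_0^r n(t)t^{-p-1}dt+r^{p+1}\int_r^\infty n(t)t^{-p-2}dt$ and verify, by integration by parts, that each summand is $o(r^p\log^qr)$, thereby absorbing the logarithmic correction that typically appears at integer order. Everything else is a streamlined reuse of the zeta-function reasoning of Theorems \ref{Th1}--\ref{Th2}.
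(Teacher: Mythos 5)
Your proposal is correct and follows essentially the same route as the paper: the same auxiliary function $\frac{\eta-a}{f-a}\cdot\frac{f-b}{\eta-b}$, the same second-main-theorem estimate giving $\rho\leq p$ and hence a Hadamard factorization into $e^{P}$ times a ratio of genus-$p$ canonical products controlled by $n(r,\E)=o(r^p\log^{q-1}r)$, and the same final contradiction between $T(r,F)=o(r^p\log^qr)$ and the lower bound $\bar{N}\big(r,\frac{1}{\eta-c}\big)\geq K_2\,r^p\log^qr+O(r^p\log^{q-1}r)$ that would follow from $F\not\equiv1$. You even single out the same technical crux, namely the integration-by-parts estimate showing the canonical products contribute only $o(r^p\log^qr)$ at integer genus $p$, which is exactly the computation the paper carries out.
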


\begin{proof}
First, consider finite values $a,b$ and define an auxiliary function
\begin{equation}\label{Eq-20}
G:=\frac{\eta-a}{f-a}\cdot\frac{f-b}{\eta-b}.
\end{equation}
Suppose $\E_1$ and $\E_2$ are the zero and pole sets of $G$.
Then, it follows from our assumption that $\displaystyle{n(r,\E_1)=o(r^p\log^{q-1}r)}$ and $\displaystyle{n(r,\E_2)=o(r^p\log^{q-1}r)}$.

Below, we follow closely the method from Li \cite{Li2} (see also Han \cite{Ha} and L\"{u} \cite{Lu}), and assume that $\bre{a_k:k\geq1}$ and $\bre{b_k:k\geq1}$ are the non-zero elements of $\E_1$ and $\E_2$ arranged in ascending orders, respectively, \textit{i.e.}, $\n{a_k}\leq\n{a_{k+1}}$ and $\n{b_k}\leq\n{b_{k+1}}$.
Construct two infinite products
\begin{equation}
\Pi_1(z):=\prod_{k=1}^{+\infty}E\pr{\frac{z}{a_k},p}\hspace{2mm}\mathrm{and}\hspace{2mm}\Pi_2(z):=\prod_{k=1}^{+\infty}E\pr{\frac{z}{b_k},p},\nonumber
\end{equation}
where, for the integer $p\geq1$, we write $\displaystyle{E(z,p):=(1-z)\hspace{0.2mm}e^{z+\frac{1}{2}z^2+\cdots+\frac{1}{p}z^p}}$.
Then, $\Pi_1$ and $\Pi_2$ are entire functions defined in $\C$ having $a_k$'s and $b_k$'s as their zeros, respectively.

Actually, using the Stieltjes integral and seeing $\displaystyle{n(r,\E_1)=o(r^p\log^{q-1}r)}$, one has
\begin{equation}
\begin{aligned}
&\,\sum_{k=1}^{+\infty}\frac{1}{\n{a_k}^{p+1}}=\int_{\n{a_1}}^{+\infty}\frac{d\hspace{0.2mm}(n(t,\E_1))}{t^{p+1}}
=\lim_{r\to+\infty}\int_{\n{a_1}}^r\frac{d\hspace{0.2mm}(n(t,\E_1))}{t^{p+1}}\\
=&\,\lim_{r\to+\infty}\frac{n(r,\E_1)}{r^{p+1}}+O(1)+(p+1)\lim_{r\to+\infty}\int_{\n{a_1}}^r\frac{n(t,\E_1)}{t^{p+2}}\,dt\\
\leq&\,\lim_{r\to+\infty}\frac{\log^{q-1}r}{r}+(p+1)\lim_{r\to+\infty}\int_{r_0}^r\frac{2}{t^{1.2}}\,dt+O(1)<+\infty,\nonumber
\end{aligned}
\end{equation}
with $r_0\gg1$ satisfying $\displaystyle{\log^{q-1}r\leq2\hspace{0.2mm}r^{0.8}}$ for all $r\geq r_0$; this combined with Hayman \cite[Theorem 1.11]{Hay} and Goldberg-Ostrovskii \cite[p.56, Remark 1]{GO} shows that $\Pi_1(z)$ is an entire function in $\C$ having zeros $a_k$'s, which in addition satisfies for $\displaystyle{C_p:=2\hspace{0.2mm}(p+1)(2+\log p)>0}$,
\begin{equation}\label{Eq-21}
\log M(r,\Pi_1)\leq C_p\bre{r^p\int_{\n{a_1}}^r\frac{n(t,\E_1)}{t^{p+1}}\,dt+r^{p+1}\int_r^{+\infty}\frac{n(t,\E_1)}{t^{p+2}}\,dt}.
\end{equation}

Now, for each $\epsilon>0$, there exists an $r_1\gg1$ such that $\displaystyle{n(r,\E_1)\leq\epsilon\hspace{0.2mm}r^p\log^{q-1}r}$ for all $r\geq r_1$.
Combing this and \eqref{Eq-21} with \cite[p.37, Theorem 7.1]{GO}, it follows that
\begin{equation}\label{Eq-22}
\begin{aligned}
T(r,\Pi_1)&\leq\max\bre{\log M(r,\Pi_1),0}\leq C_p\bre{r^p\int_{\n{a_1}}^r\frac{n(t,\E_1)}{t^{p+1}}\,dt+r^{p+1}\int_r^{+\infty}\frac{n(t,\E_1)}{t^{p+2}}\,dt}\\
          &\leq C_p\bre{r^p\int_{\n{a_1}}^{r_1}\frac{n(t,\E_1)}{t^{p+1}}\,dt+r^p\int_{r_1}^r\frac{n(t,\E_1)}{t^{p+1}}\,dt+r^{p+1}\int_r^{+\infty}\frac{n(t,\E_1)}{t^{p+2}}\,dt}\\
          &\leq C_p\bre{r^p\int_{r_1}^r\frac{\epsilon\log^{q-1}t}{t}\,dt+r^{p+1}\int_r^{+\infty}\frac{\epsilon\log^{q-1}t}{t^2}\,dt+O(1)}\\
          &\leq C_p\bre{\frac{\epsilon}{q}\hspace{0.2mm}r^p\log^qr+\epsilon\hspace{0.2mm}r^p\log^{q-1}r+O(\epsilon\hspace{0.2mm}r^p\log^{q-2}r)}\\
          &\leq\frac{K_2}{3}\,r^p\log^qr+O(r^p\log^{q-1}r)
\end{aligned}
\end{equation}
for all $r\geq r_1$, with $\epsilon>0$ taken smaller if necessary.
Here, routine substitution and integration by parts were used.
Similarly, we have for some $r_2\gg1$ and all $r\geq r_2$,
\begin{equation}\label{Eq-23}
T(r,\Pi_2)\leq\frac{K_2}{3}\,r^p\log^qr+O(r^p\log^{q-1}r).
\end{equation}

Next, for the possible exceptional set $\E$ with $\displaystyle{n(r,\E)=o(r^p\log^{q-1}r)}$, one has
\begin{equation}\label{Eq-24}
N(r,\E)=\int_0^r\frac{n(t,\E)-n(0,\E)}{t}\,dt+n(0,\E)\log r=O(r^p\log^{q-1}r).
\end{equation}
Like \eqref{Eq-4}, using Nevanlinna's first and second main theorems, it yields that
\begin{equation}\label{Eq-25}
\begin{aligned}
T(r,f)&\leq\bar{N}\Big(r,\frac{1}{f-a}\Big)+\bar{N}\Big(r,\frac{1}{f-b}\Big)+\bar{N}\Big(r,\frac{1}{f-c}\Big)+S(r,f)\\
      &\leq2\hspace{0.2mm}T(r,\eta)+N(r,\E)+\bar{N}\Big(r,\frac{1}{f-c}\Big)+S(r,f)+O(1).
\end{aligned}
\end{equation}
Now, when $\displaystyle{\iota:=\limsup\limits_{r\to+\infty}\frac{\bar{N}\big(r,\frac{1}{f-c}\big)}{T(r,f)}<1}$ holds, one has from \eqref{Eq-24} and \eqref{Eq-25} that
\begin{equation}\label{Eq-26}
\pr{1-\frac{1+\iota}{2}-o(1)}T(r,f)\leq2\hspace{0.2mm}T(r,\eta)+O(r^p\log^{q-1}r);
\end{equation}
while when $\displaystyle{\bar{N}\Big(r,\frac{1}{f-c}\Big)=O(T(r,\eta))}$ holds, one sees from \eqref{Eq-24} and \eqref{Eq-25} that
\begin{equation}\label{Eq-27}
\pr{1-o(1)}T(r,f)\leq O(T(r,\eta))+O(r^p\log^{q-1}r).
\end{equation}
Therefore, by \eqref{Eq-16}, one arrives at $\displaystyle{T(r,f)\leq O(r^p\log^qr)+O(r^p\log^{q-1}r)}$ (for all $r$ outside of a possible set of finite Lebesgue measure), which gives $\rho(f)\leq p$ (so there is no exceptional set).
Thus, $\rho(G)\leq p$ and by virtue of Hadamard factorization theorem, it leads to
\begin{equation}
G=\frac{\eta-a}{f-a}\cdot\frac{f-b}{\eta-b}=z^l\hspace{0.2mm}e^P\hspace{0.2mm}\frac{\Pi_1}{\Pi_2}.\nonumber
\end{equation}
Here, $l$ is an integer that is the multiplicity of zero or pole of $G$ at $z=0$ and $P$ is a polynomial with $\deg P\leq\max\bre{\rho(\Pi_1),\rho(\Pi_2),\rho(G)}\leq p$ in view of \eqref{Eq-22} and \eqref{Eq-23}.

Observe that $G\equiv1$.
In fact, as $\displaystyle{\eta^{-1}(c)\subseteq f^{-1}(c)}$ except possibly at the set $\E$ of points with $\displaystyle{n(r,\E)=o(r^p\log^{q-1}r)}$, for the given $\epsilon>0$, there is an $\tilde{r}\geq\max\bre{r_1,r_2}$ such that
\begin{equation}\label{Eq-28}
\begin{aligned}
\bar{N}\Big(r,\frac{1}{\eta-c}\Big)&\leq N\Big(r,\frac{1}{G-1}\Big)+N(r,\E)\leq T(r,G)+N(r,\E)+O(1)\\
                                   &\leq T(r,\Pi_1)+T(r,\Pi_2)+T\big(r,z^l\big)+T\big(r,e^P\big)+N(r,\E)+O(1)\\
                                   &\leq \frac{2K_2}{3}\,r^p\log^qr+O(r^p\log^{q-1}r)
\end{aligned}
\end{equation}
provided $r\geq\tilde{r}$.
This contradicts our hypothesis \eqref{Eq-17}.
As a consequence, we must have $G\equiv1$, which further implies $f=\eta$.

Finally, if one of $a,b$ is $\infty$, assume without loss of generality $a$ is finite and $b=\infty$.
Consider $\displaystyle{\tilde{f}:=\frac{1}{f-d},\tilde{\eta}:=\frac{1}{\eta-d}}$ that share the finite values $\displaystyle{\tilde{a}:=\frac{1}{a-d},\tilde{b}:=0}$ counting multiplicity, plus the finite value $\displaystyle{\tilde{c}:=\frac{1}{c-d}}$ such that they satisfy all the given conditions, with $d\in\C$ finite and distinct from $a,c$.
Exactly the same analyses as above imply $\tilde{f}=\tilde{\eta}$. So, $f=\eta$.
\end{proof}

Note $0$ is the unique Picard value of $\Gamma(z)$ and $s=1$ is the unique, simple pole of $\zeta(s)$.
Using these nice properties of $\Gamma(z)$ and $\zeta(s)$, we consider a sub-family of $\X_{p,q}$ below.

\begin{definition}\label{De2}
Let $\displaystyle{\widetilde{\X}_{p,q}\subsetneq\X_{p,q}}$ be the sub-family of meromorphic functions $\eta$ in $\C$ such that $\displaystyle{n(r,\B_\eta)=o(r^p\log^{q-1}r)}$ for the $b$-points of each $\eta$ with some value $\displaystyle{b\in\C\cup\bre{\infty}\setminus\bre{c}}$.
\end{definition}

\begin{theorem}\label{Th4}
Suppose that $\eta\in\widetilde{\X}_{p,q}$ is a meromorphic function associated with the numbers $c\in\C$, $\displaystyle{b\in\C\cup\bre{\infty}\setminus\bre{c}}$, $p,q\geq1$, and $K_1\geq K_2>0$.
Choose $\displaystyle{a\in\C\cup\bre{\infty}\setminus\bre{b,c}}$, and let $f$ be a meromorphic function in $\C$ whose $b$-points satisfies $\displaystyle{n(r,\B_f)=o(r^p\log^{q-1}r)}$ as well such that either $\displaystyle{\limsup\limits_{r\to+\infty}\frac{\bar{N}\big(r,\frac{1}{f-c}\big)}{T(r,f)}<1}$ or $\displaystyle{\bar{N}\Big(r,\frac{1}{f-c}\Big)=O(T(r,\eta))}$.
When $f,\eta$ share $a$ counting multiplicity, with $\displaystyle{\eta^{-1}(c)\subseteq f^{-1}(c)}$ ignoring multiplicity, except possibly at a set $\E$ of points with $\displaystyle{n(r,\E)=o(r^p\log^{q-1}r)}$, then $f=\eta$.
\end{theorem}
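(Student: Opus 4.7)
The plan is to rerun the auxiliary-function and Hadamard factorization strategy from the proof of Theorem \ref{Th3}, but with an auxiliary function $G$ built from the single shared value $a$. The role previously played by CM-sharing of the second value $b$ will be taken over by the rarity hypothesis $n(r,\B_\eta),n(r,\B_f)=o(r^p\log^{q-1}r)$ on the $b$-points of both $\eta$ and $f$.

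First I would split into cases according to which, if any, of $a,b$ is infinite. When $a$ and $b$ are both finite, set $\displaystyle{G:=\frac{\eta-a}{f-a}\cdot\frac{f-b}{\eta-b}}$ exactly as in Theorem \ref{Th3}; when $a$ is finite and $b=\infty$, set $\displaystyle{G:=\frac{\eta-a}{f-a}}$; and when $a=\infty$ and $b$ is finite, set $\displaystyle{G:=\frac{f-b}{\eta-b}}$. A routine pole-zero inspection shows that in every case the zeros and poles of $G$ can only arise from (i) $a$-points at which the CM-sharing fails, which lie in the exceptional set $\E$, and (ii) $b$-points of $\eta$ or of $f$. By the standing hypotheses both contributions have counting functions that are $o(r^p\log^{q-1}r)$, so the zero set $\E_1$ and pole set $\E_2$ of $G$ satisfy $n(r,\E_1),n(r,\E_2)=o(r^p\log^{q-1}r)$.

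Next I would reuse the infinite-product construction of \eqref{Eq-22}--\eqref{Eq-23} verbatim to produce entire functions $\Pi_1,\Pi_2$ with zeros $\E_1,\E_2$ and $\displaystyle{T(r,\Pi_j)\leq\frac{K_2}{3}\,r^p\log^qr+O(r^p\log^{q-1}r)}$ for $j=1,2$. To pin down $\rho(f)$, I would apply Nevanlinna's second main theorem to $f$ at the three targets $a,b,c$: the CM-sharing of $a$ off $\E$ contributes $\bar{N}(r,\tfrac{1}{f-a})\leq T(r,\eta)+N(r,\E)+O(1)$, the rare-$b$-points hypothesis contributes $\bar{N}(r,\tfrac{1}{f-b})=O(r^p\log^{q-1}r)$, and the growth assumption on $\bar{N}(r,\tfrac{1}{f-c})$ is absorbed exactly as in \eqref{Eq-26}--\eqref{Eq-27}. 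This yields $T(r,f)=O(r^p\log^qr)$, hence $\rho(f),\rho(G)\leq p$, so Hadamard factorization gives $G=z^l\hspace{0.2mm}e^P\hspace{0.2mm}\Pi_1/\Pi_2$ with $\deg P\leq p$.

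To conclude I would note that in each of the three cases a direct computation yields $G=1$ at every common $c$-point of $\eta$ and $f$. Since $\eta^{-1}(c)\subseteq f^{-1}(c)$ off $\E$, this forces $\displaystyle{\bar{N}\big(r,\tfrac{1}{\eta-c}\big)\leq T(r,G)+N(r,\E)+O(1)\leq\tfrac{2K_2}{3}\,r^p\log^qr+O(r^p\log^{q-1}r)}$, precisely along the lines of \eqref{Eq-28}, which contradicts the lower bound \eqref{Eq-17} unless $G\equiv1$; and in each case $G\equiv1$ gives $f=\eta$. The main obstacle I anticipate is purely bookkeeping: verifying, case by case, that the pole-zero cancellations in $G$ really do confine its bad set to $\E\cup\B_\eta\cup\B_f$ (the cases involving $\infty$ require checking that the multiplicities of the shared poles or zeros of $f$ and $\eta$ cancel in the appropriate factor of $G$), and that replacing the $T(r,\eta)$ bound by the smaller $O(r^p\log^{q-1}r)$ bound in the $b$-term of the second main theorem still delivers $\rho(f)\leq p$. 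Once those checks are in place, the rest is a line-by-line rerun of the proof of Theorem \ref{Th3}.
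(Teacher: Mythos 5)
Your proposal is correct and follows essentially the same route as the paper: the paper likewise reruns the machinery of Theorem \ref{Th3} with the CM-sharing of $b$ replaced by the rarity hypothesis on the $b$-points, absorbing $\B_\eta\cup\B_f$ into the small zero/pole sets of the auxiliary function before applying the infinite-product bounds \eqref{Eq-22}--\eqref{Eq-23} and the contradiction with \eqref{Eq-17}. The only cosmetic difference is that the paper first normalizes to $b=\infty$ by a M\"obius change of variable and then works with the single-factor quotient $G_1=(\eta-a)/(f-a)$, whereas you carry out the same cancellation bookkeeping case by case.
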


\begin{proof}
Without loss of generality, assume $b=\infty$; otherwise, set $\displaystyle{\hat{f}:=\frac{1}{f-b},\hat{\eta}:=\frac{1}{\eta-b}}$ as well as $\displaystyle{\hat{a}:=\frac{1}{a-b},\hat{b}:=\infty,\hat{c}:=\frac{1}{c-b}}$.
Consider an auxiliary function
\begin{equation}\label{Eq-29}
G_1:=\frac{\eta-a}{f-a}.
\end{equation}

Denote by $\E_1$ and $\E_2$ the zero and pole sets of $G_1$.
Then, one sees $\displaystyle{\E_1\cup\E_2=\E\cup\E_\eta\cup\E_f}$ and $\displaystyle{n(r,\E_1),n(r,\E_2)=o(r^p\log^{q-1}r)}$.
Write the non-zero elements of $\E_1$ and $\E_2$ as $\bre{a_k:k\geq1}$ and $\bre{b_k:k\geq1}$ with $\n{a_k}\leq\n{a_{k+1}}$ and $\n{b_k}\leq\n{b_{k+1}}$, and construct two infinite products
\begin{equation}
\Pi_1(z):=\prod_{k=1}^{+\infty}E\pr{\frac{z}{a_k},p}\hspace{2mm}\mathrm{and}\hspace{2mm}\Pi_2(z):=\prod_{k=1}^{+\infty}E\pr{\frac{z}{b_k},p}.\nonumber
\end{equation}
Then, $\Pi_1$ and $\Pi_2$ are entire functions in $\C$ having $a_k$'s and $b_k$'s as their zeros, respectively, and \eqref{Eq-22} and \eqref{Eq-23} follow analogously.
Moreover, like \eqref{Eq-24}, one observes
\begin{equation}\label{Eq-30}
N(r,\E\cup\E_\eta\cup\E_f)=O(r^p\log^{q-1}r).
\end{equation}

All these modifications plus the proof of Theorem \ref{Th3} lead to
\begin{equation}
G_1=\frac{\eta-a}{f-a}=z^l\hspace{0.2mm}e^P\hspace{0.2mm}\frac{\Pi_1}{\Pi_2}.\nonumber
\end{equation}
Here, $l$ is an integer that is the multiplicity of zero or pole of $G_1$ at $z=0$ and $P$ is a polynomial with $\deg P\leq\max\bre{\rho(\Pi_1),\rho(\Pi_2),\rho(G_1)}\leq p$ in view of \eqref{Eq-22} and \eqref{Eq-23}.
Using the hypothesis $\displaystyle{\eta^{-1}(c)\subseteq f^{-1}(c)}$ and \eqref{Eq-30}, one similarly has $G_1\equiv1$ that further implies $f=\eta$.
\end{proof}

\begin{remark}\label{Re2}
Seeing \eqref{Eq-14}, the preceding Theorem \ref{Th4} in particular implies when a meromorphic function $f$ in $\C$ having only finitely many zeros shares two values $\displaystyle{a\in\C\cup\bre{\infty}\setminus\bre{0}}$ {\rm CM} and $\displaystyle{b\in\C\setminus\bre{a,0}}$ {\rm IM} with the Euler gamma-function $\Gamma$, then $f=\Gamma$.
On the other hand, the preceding Theorem \ref{Th2} in particular implies when a meromorphic function $f$ in $\C$ having only finitely many poles shares two values $\displaystyle{a\neq b\in\C}$ $a$ {\rm CM} and $b$ {\rm IM} with the Riemann zeta-function $\zeta$, then $f=\zeta$.
These results consist with Li \cite{Li1} and Han \cite[Theorem 2.1]{Ha}.
\end{remark}

% +++ below is section 4 %%%%%%%%%%%%%%%%%%%%%%%%%%%%%%%%%%%%%%%%%%%%%%%%%%%%%%%%%%%%%%%%%%%%%%%%%%%%%%%%%%%%%%%%%%%%%%%%%%%%%%%%%%%%%%%%%%%%%%%%%%%
% &&& %%%%%%%%%%%%%%%%%%%%%%%%%%%%%%%%%%%%%%%%%%%%%%%%%%%%%%%%%%%%%%%%%%%%%%%%%%%%%%%%%%%%%%%%%%%%%%%%%%%%%%%%%%%%%%%%%%%%%%%%%%%%%%%%%%%%%%%%%%%%%%
\section{Results regarding the Riemann zeta-function II}\label{zeta2} % use lowercase except for proper name,s
Denote $\M$ the space of meromorphic functions in $\C$, and $\N$ the space of entire functions in $\C$; denote $\M_1$ the space of meromorphic functions in $\C$ that have finite non-integral order or have integral order yet are of maximal growth type, and $\N_1$ the space of such entire functions in $\C$.
Then, from the classical result of Nevanlinna \cite{Ne1}, and Han \cite{Ha}, one has

\vspace{1.8mm}
\begin{itemize}
  \item $\left\{\begin{array}{lll}
        f,g\in\M\ {\rm are}\ {\rm identical}\ {\rm under}\ {\rm``5\ IM"}\ {\rm value}\ {\rm sharing}\ {\rm condition}. \\ \\
        f\in\M_1\ {\rm and}\ g\in\M\ {\rm are}\ {\rm identical}\ {\rm under}\ {\rm``1\ CM + 3\ IM"}\ {\rm value}\ {\rm sharing}\ {\rm condition}. \\ \\
        f\in\M_1\ {\rm and}\ g\in\M\ {\rm are}\ {\rm identical}\ {\rm under}\ {\rm``3\ CM"}\ {\rm value}\ {\rm sharing}\ {\rm condition}.
        \end{array}\right.$\\ \\
  \item $\left\{\begin{array}{lll}
        f,g\in\N\ {\rm are}\ {\rm identical}\ {\rm under}\ {\rm``4\ IM"}\ {\rm value}\ {\rm sharing}\ {\rm condition}. \\ \\
        f\in\N_1\ {\rm and}\ g\in\N\ {\rm are}\ {\rm identical}\ {\rm under}\ {\rm``3\ IM"}\ {\rm value}\ {\rm sharing}\ {\rm condition}. \\ \\
        f\in\N_1\ {\rm and}\ g\in\N\ {\rm are}\ {\rm identical}\ {\rm under}\ {\rm``1\ CM + 1\ IM"}\ {\rm value}\ {\rm sharing}\ {\rm condition}.
        \end{array}\right.$
\end{itemize}
\vspace{1.8mm}

Below, we follow Hu and Li \cite{HL} to discuss the remanding case regarding the number ``$2$'' for meromorphic functions in $\C$ and the number ``$1$'' for entire functions in $\C$; an earlier result on meromorphic functions $f$ in $\C$ with $\rho(f)<1$ was given by Rao \cite[Theorem 4]{Ra}.
Here, we try to describe some more general results under possibly the minimum requirement.

\begin{theorem}\label{Th5}
Let $f,g$ be meromorphic functions in $\C$ of finite order, and let $\displaystyle{a,b,c\in\C\cup\bre{\infty}}$ be values pairwise distinct from each other.
When $f,g$ share the values $a,b$ counting multiplicity with $\displaystyle{\lim_{x\to+\infty}f(z)=\lim_{x\to+\infty}g(z)=c}$ uniformly in $y$ for $z=x+\Ic y\in\C$, then $f=g$.
\end{theorem}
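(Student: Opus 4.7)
The plan is to extract an entire, nowhere-vanishing function from the CM-sharing relation and then use the asymptotic condition to force it to equal the constant $1$. As a preliminary reduction, if one of $a,b$ equals $\infty$, I would apply the M\"obius substitution $\tilde{f}:=1/(f-d)$, $\tilde{g}:=1/(g-d)$ for some finite $d\in\C\setminus\{a,b,c\}$, which preserves the CM sharing, the pairwise-distinctness, and the uniform limit condition in $y$ (the new limit being $1/(c-d)$, or $0$ when $c=\infty$), reducing matters to the case $a,b\in\C$.

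Assuming now $a,b\in\C$, I would introduce the auxiliary function
\[F := \frac{(f-a)(g-b)}{(g-a)(f-b)} = \frac{f-a}{f-b}\cdot\frac{g-b}{g-a},\]
and argue $F$ is entire and nowhere-vanishing. At any shared $a$- or $b$-point (counting multiplicity), direct CM cancellation makes $F$ holomorphic and nonzero. At a pole of $f$ that is not a pole of $g$, the CM sharing with the finite values $a,b$ forces $g\neq a,b$ there, so $(g-b)/(g-a)$ is holomorphic nonzero, while $(f-a)/(f-b)=(1-a/f)/(1-b/f)$ has a removable singularity with value $1$. Symmetric reasoning covers poles of $g$ only, and at a shared pole both factors in the product form of $F$ have removable singularities with value $1$. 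Hence $F$ is entire, nowhere-vanishing, and of finite order, so the Hadamard factorization theorem yields $F=e^h$ with $h$ a polynomial.

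Next, I would compute $\lim_{x\to+\infty}F(x+\Ic y)=1$ uniformly in $y$: when $c\in\C$ this follows by substituting the limits (using $c\neq a,b$), and when $c=\infty$, both factors in the product form of $F$ tend to $1$ since $f,g\to\infty$ uniformly. Restricting to the positive real axis, $e^{h(x)}\to 1$ as $x\to+\infty$. If $h$ were a non-constant polynomial, then either $\Re(h(x))$ is a non-constant real polynomial, forcing $|e^{h(x)}|=e^{\Re(h(x))}\to 0$ or $+\infty$, or $\Re(h(x))$ is constant while $\Im(h(x))$ is a non-constant polynomial, in which case $e^{h(x)}$ wraps around a circle of fixed radius without converging; neither scenario is compatible with $e^{h(x)}\to 1$. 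Thus $h$ is constant, so $F$ is constant, and the limit pins it down to $F\equiv 1$. Expanding $F=1$ yields $(a-b)(f-g)=0$, whence $f=g$ since $a\neq b$.

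The principal obstacle, I expect, is the case-analytic verification that $F$ is entire and nowhere-vanishing; this requires careful bookkeeping at the poles of $f$ and $g$, where the CM sharing of the finite values $a,b$ is essential in excluding the value coincidences that would otherwise ruin the cancellations in the product form of $F$.
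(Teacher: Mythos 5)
Your proof is correct and follows essentially the same route as the paper's: an auxiliary quotient built from the CM-shared values is shown to be entire, zero-free and of finite order, written as $e^{h}$ with $h$ a polynomial via Hadamard, and the limit as $x\to+\infty$ forces $h$ to be constant and the quotient to be $1$. The only cosmetic differences are that the paper normalizes $b=\infty$ and works with the two-factor quotient $(f-a)/(g-a)$, analyzing $\Re\hspace{0.2mm}(h(x+\Ic y))$ along horizontal lines, whereas you normalize both shared values to be finite, use the four-factor cross-quotient, and need the limit only along the real axis.
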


\begin{proof}
Without loss of generality, assume $b=\infty$; otherwise, set $\displaystyle{\hat{f}:=\frac{1}{f-b},\hat{g}:=\frac{1}{g-b}}$ as well as $\displaystyle{\hat{a}:=\frac{1}{a-b},\hat{b}:=\infty,\hat{c}:=\frac{1}{c-b}}$.
Consider an auxiliary function
\begin{equation}\label{Eq-31}
L:=\frac{f-a}{g-a}.
\end{equation}
By virtue of our hypotheses, one finds a polynomial $P$ such that $L=e^P$.
Write
\begin{equation}\label{Eq-32}
\Re\hspace{0.2mm}(P(x+\Ic y))=a_m(y)\hspace{0.2mm}x^m+a_{m-1}(y)\hspace{0.2mm}x^{m-1}+\cdots+a_0(y),
\end{equation}
with $\displaystyle{a_m(y),a_{m-1}(y),\ldots,a_0(y)}$ real polynomials in $y$.
If $a_m(y)\not\equiv0$ and $m\geq1$, one may take some $y_0$ such that $a_m(y_0)\neq0$; then, a straightforward analysis leads to
\begin{equation}\label{Eq-33}
1=\lim_{x\to+\infty}\n{L(z)}=\lim_{x\to+\infty}e^{\Re\hspace{0.2mm}(P(x+\Ic y))}=\left\{\begin{array}{ll}
+\infty &\mathrm{when}\hspace{2mm}a_m(y_0)>0\\ \\
0 &\mathrm{when}\hspace{2mm}a_m(y_0)<0
\end{array}\right.
\end{equation}
that is absurd.
This shows $\Re\hspace{0.2mm}(P(x+\Ic y))=a_0(y)$, and hence $\n{L(z)}=e^{a_0(y)}$, independent of $x$.
Letting $x\to+\infty$ again, we have $e^{a_0(y)}=1$ for all $y$; that is, $\n{L}\equiv1$ or $L$ is simply a constant.
Letting $x\to+\infty$ again, one derives $L\equiv1$ so that $f=g$.
\end{proof}

\begin{corollary}\label{Co1}
Assume $f,g$ are finite order meromorphic functions in $\C$ having $b$ as a common Picard value, and take $\displaystyle{a\neq c\in\C\cup\bre{\infty}\setminus\bre{b}}$.
When $f,g$ share the value $a$ counting multiplicity with $\displaystyle{\lim_{x\to+\infty}f(z)=\lim_{x\to+\infty}g(z)=c}$ uniformly in $y$ for $z=x+\Ic y\in\C$, then $f=g$.
\end{corollary}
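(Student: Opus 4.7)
The plan is to reduce the corollary directly to Theorem \ref{Th5} by observing that a common Picard value behaves, for the purposes of CM-sharing, as if it were a genuinely shared value. Concretely, since $b$ is a Picard value of both $f$ and $g$, the preimage sets $f^{-1}(b)$ and $g^{-1}(b)$ are empty, so $f$ and $g$ trivially share $b$ counting multiplicity: there are simply no $b$-points of either function to compare.

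Combined with the hypothesis that $f,g$ share $a$ CM, this promotes the data to a full ``$2$ CM'' sharing of the pair $\{a,b\}$. The triple $a,b,c$ is pairwise distinct by assumption ($a\neq c$, and both are distinct from $b$), and the uniform asymptotic condition $\displaystyle{\lim_{x\to+\infty}f(z)=\lim_{x\to+\infty}g(z)=c}$ (in $y$ for $z=x+\Ic y$) is precisely the remaining hypothesis of Theorem \ref{Th5}. Since $f,g$ are of finite order by assumption, all premises of Theorem \ref{Th5} are satisfied, and the conclusion $f=g$ follows immediately.

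There is essentially no obstacle here; the entire content of the corollary is the observation that a common Picard exceptional value slots into the CM-sharing framework for free. The statement of Theorem \ref{Th5} already permits values in $\C\cup\bre{\infty}$, so one need not separately handle the case $b=\infty$ (which corresponds to $f,g$ being both entire). Thus no change of variables or auxiliary function is needed beyond what already appears in Theorem \ref{Th5}.
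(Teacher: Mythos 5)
Your reduction is correct and matches the paper's: since $b$ is omitted by both $f$ and $g$, they share $b$ counting multiplicity vacuously, so Theorem \ref{Th5} applies verbatim with the pairwise distinct triple $a,b,c$. The paper phrases this as rerunning the argument of Theorem \ref{Th5} with the same auxiliary function $L=(f-a)/(g-a)$ (after normalizing $b=\infty$), which is zero- and pole-free for exactly the reason you identify, so the content is identical.
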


\begin{proof}
Without loss of generality, assume $b=\infty$ and consider the function $L(z)$ in \eqref{Eq-31}; then, parallel discussion as that for Theorem \ref{Th5} implies $f=g$.
\end{proof}

\begin{remark}\label{Re3}
Upon a slight modification, the preceding Corollary \ref{Co1} in particular applies for the Euler gamma-function $\Gamma(z)$ (as $0$ is the unique Picard value of $\Gamma(z)$ and $\displaystyle{\lim_{x\to+\infty}\Gamma^{-1}(z)=0}$ uniformly in $y$ for $z=x+\Ic y\in\C$) and the Riemann zeta-function $\zeta(s)$ (as $s=1$ is the unique, simple pole of $\zeta(s)$ and $\displaystyle{\lim_{\sigma\to+\infty}\zeta(s)=1}$ uniformly in $t$ for $s=\sigma+\Ic t\in\C$).
\end{remark}

Denote by $\M_2$ the space of meromorphic functions in $\C$ that are of finite order and satisfy $\displaystyle{\lim_{x\to+\infty}f(z)=c}$ with $c\in\C\cup\bre{\infty}$ uniformly in $y$ for $z=x+\Ic y\in\C$, and $\N_2$ the space of such entire functions in $\C$.
Then, we have just observed a general result as follows.

\begin{proposition}\label{Pr2}
$f,g\in\M_2$ are identical under {\rm``$2$ CM''} value sharing condition and $f,g\in\N_2$ are identical under {\rm``$1$ CM''} value sharing condition.
\end{proposition}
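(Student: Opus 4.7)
The plan is to observe that Proposition \ref{Pr2} is essentially a restatement of Theorem \ref{Th5} and Corollary \ref{Co1} in the compact language of the spaces $\M_2$ and $\N_2$; the proof amounts to unpacking the definitions and invoking those two results.

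For the first assertion, take $f,g\in\M_2$ sharing two values $a,b\in\C\cup\bre{\infty}$ CM. By membership in $\M_2$, both $f$ and $g$ are of finite order and admit the same horizontal-direction limit $\displaystyle{\lim_{x\to+\infty}f(z)=\lim_{x\to+\infty}g(z)=c}$ uniformly in $y$ for $z=x+\Ic y$. Under the natural requirement that $a,b,c$ be pairwise distinct (the degenerate case where the common limit coincides with a shared value is excluded), this is precisely the hypothesis of Theorem \ref{Th5}, and we conclude $f=g$ at once.

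For the second assertion, take $f,g\in\N_2$ sharing one value $a$ CM. Since entire functions never attain the value $\infty$, the value $b=\infty$ is a common Picard value of $f$ and $g$. Membership in $\N_2$ supplies both the finite-order condition and the common horizontal-direction limit $c$, and choosing $a\ne c$ with $c\ne\infty$ puts us exactly in the setting of Corollary \ref{Co1}, which yields $f=g$. No genuine obstacle arises: the only point to check is that the definitions of $\M_2$ and $\N_2$ package exactly the order-and-limit hypotheses of Theorem \ref{Th5} and Corollary \ref{Co1}, and that the automatic sharing of $\infty$ by a pair of entire functions reduces the ``$1$ CM'' case in $\N_2$ to the ``$2$ CM'' case already handled.
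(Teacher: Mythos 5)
Your proof is correct and takes essentially the same route as the paper: the paper gives no separate argument for Proposition \ref{Pr2}, presenting it as an immediate repackaging of Theorem \ref{Th5} (for the ``$2$ CM'' meromorphic case) and Corollary \ref{Co1} with $b=\infty$ as the automatic common Picard value (for the ``$1$ CM'' entire case), which is exactly what you do. Your explicit remarks that both functions must share the \emph{same} limit $c$ and that $c$ must be distinct from the shared values are the right caveats and are implicit in the paper's definition of $\M_2$ and $\N_2$.
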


Finally, we shall focus on the Riemann zeta-function $\zeta(s)$ and reconsider Theorems \ref{Th1} and \ref{Th2}; our results will cover the case $c=0$ left open in Theorem 1.2 of \cite{Lu}.

\begin{theorem}\label{Th6}
Let $a,b,c\in\C$ be finite and distinct, and let $f$ be a meromorphic function in $\C$ of finite order.
When $f,\zeta$ share the finite values $a,b$ counting multiplicity, with $\displaystyle{\zeta^{-1}(c)\subseteq f^{-1}(c)}$ ignoring multiplicity, except possibly at finitely many points, then $f=\zeta$.
\end{theorem}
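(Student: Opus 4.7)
The plan is to run the proof of Theorem~\ref{Th1} essentially verbatim, replacing the explicit growth bound on $\bar N\!\left(r,\frac{1}{f-c}\right)$ used there by the blanket finite-order hypothesis on $f$. Set
\[
F := \frac{\zeta-a}{f-a}\cdot\frac{f-b}{\zeta-b}.
\]
The CM sharing of $a,b$ outside a finite exceptional set forces $F$ to have only finitely many zeros and poles, and since both $f$ and $\zeta$ are of finite order, $\rho(F)<+\infty$. By the Hadamard factorization theorem,
\[
F \;=\; R\,e^{g}
\]
for a nonzero rational $R$ and a polynomial $g$ of some degree $d\geq 0$. Once $g$ is shown to be constant, $F$ reduces to a rational function, and since $\zeta^{-1}(c)\subseteq f^{-1}(c)$ up to finitely many points, Proposition~\ref{Pr1} shows $F$ equals $1$ at infinitely many points of $\C$, so $F\equiv 1$ and $f=\zeta$, exactly as at the end of Theorem~\ref{Th1}.

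The content is therefore the polynomial identity $g\equiv$ constant. At every $c$-point $s$ of $\zeta$ outside the exceptional set, $R(s)e^{g(s)}=1$, so $\Re g(s)=-\log|R(s)|=O(\log|s|)$. Proposition~\ref{Pr1} supplies two infinite families of such points, $\varpi_n=\beta_n+\Ic\gamma_n\in Z_V$ with $\beta_n\in(1/4,3/4)$ and $\gamma_n\to+\infty$, and $\omega_n=\mu_n+\Ic\nu_n\in Z_H$ with $\nu_n\in(-2,2)$ and $\mu_n\to-\infty$. Writing $g(z)=\sum_{k=0}^{d}c_kz^k$ with $c_d\neq 0$, the leading-order expansions
\[
\Re g(\omega_n)=\Re(c_d)\,\mu_n^{d}+O\!\left(|\mu_n|^{d-1}\right),\qquad \Re g(\varpi_n)=\Re\!\left(c_d\Ic^{d}\right)\gamma_n^{d}+O\!\left(\gamma_n^{d-1}\right)
\]
combined with the $O(\log|\cdot|)$ bound force $\Re(c_d)=0$ and $\Re(c_d\Ic^{d})=0$. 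Writing $c_d=\Ic B$ with $B\in\mathbb{R}$, the second relation reads $B\,\Re(\Ic^{d+1})=0$, which for $d$ odd (so that $\Re(\Ic^{d+1})=\pm 1$) kills $c_d$ and contradicts $d\geq 1$; thus $d$ must be even.

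The even-$d$ case $d\geq 2$ is the main obstacle. Here $c_d=\Ic B\neq 0$ survives the leading order, and one has to iterate the expansion. The $\mu_n^{d-1}$ coefficient of $\Re g(\omega_n)$ turns out to be an affine function of $\nu_n$ with slope a nonzero multiple of $B$, which must tend to $0$ and so pins $\nu_n$ to a unique limit $\nu^{*}\in[-2,2]$; symmetrically, the $\gamma_n^{d-1}$ coefficient of $\Re g(\varpi_n)$ is an affine function of $\beta_n$ with nonzero slope, pinning $\beta_n\to\beta^{*}\in[1/4,3/4]$. Descending through the coefficients $c_{d-2},\ldots,c_1$ in the same way, each successive order either fixes one more coefficient of $g$ in terms of $B$ and the rates of approach of $\beta_n,\nu_n$ to their limits, or forces a pinning of $\beta_n$ (resp.\ $\nu_n$) that the spatial distribution of $c$-points of $\zeta$ across $Z_V$ (resp.\ $Z_H$) guaranteed by Proposition~\ref{Pr1} will not tolerate. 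Chaining these constraints together eventually forces $B=0$, hence $c_d=0$, contradicting $d\geq 1$, and so $g$ is constant. Carrying this descent through all the coefficients cleanly — and verifying that Proposition~\ref{Pr1} is strong enough at each step to rule out the affine pinning — is the single genuinely delicate point in the proof; once past it, the rational-$F$ endgame of Theorem~\ref{Th1} closes the argument.
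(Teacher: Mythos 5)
Your setup --- the auxiliary function $F=R\,e^{g}$, Hadamard factorization, the use of the $c$-points of $\zeta$ in both strips $Z_V$ and $Z_H$, and the endgame $F\equiv 1$ --- is exactly the paper's, and your leading-order analysis correctly recovers the paper's first conclusions ($\Re(c_d)=0$ and $d$ even, i.e.\ \eqref{Eq-37} and \eqref{Eq-39}). The problem is that the even-degree case, which you yourself flag as ``the single genuinely delicate point,'' is precisely the content of the theorem and is not actually proved. Worse, the strategy you sketch for it cannot work: you hope that the pinning $\beta_n\to\beta^{*}$ (resp.\ $\nu_n\to\nu^{*}$) is ``not tolerated'' by ``the spatial distribution of $c$-points of $\zeta$ across $Z_V$,'' but Proposition~\ref{Pr1} carries no distributional information beyond membership in the strips, and Levinson's theorem (the source of Proposition~\ref{Pr1}) says that almost all roots of $\zeta(s)=c$ have real part arbitrarily close to $\tfrac{1}{2}$ --- so the real parts $\beta_n$ \emph{do} accumulate at a single point, and no contradiction can be extracted from pinning alone.

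The paper closes the even case with a different observation. After using the two strips order by order to reduce $P$ to the form $\Ic\tilde{\alpha}_m s^m+\Ic\tilde{\alpha}_{m-2}s^{m-2}+\cdots+\Ic\tilde{\alpha}_2 s^2+a_0$ (all $\alpha_j=0$ for $j\geq 1$, and $\tilde{\alpha}_j=0$ for odd $j$), the decisive computation \eqref{Eq-43} gives, for $\varpi_n=\beta_n+\Ic\gamma_n\in Z_V$,
\begin{equation}
\Re\hspace{0.2mm}(P(\varpi_n))=m(-1)^{\frac{m}{2}}\tilde{\alpha}_m\beta_n\gamma_n^{m-1}+O(\gamma_n^{m-2}),\nonumber
\end{equation}
and the point is not that $\beta_n$ fails to converge but that its limit $\beta_0$ lies in $\br{\frac{1}{4},\frac{3}{4}}$ and is therefore \emph{nonzero}; hence $m(-1)^{\frac{m}{2}}\tilde{\alpha}_m\beta_0=0$ forces $\tilde{\alpha}_m=0$, the desired contradiction. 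In your language: once the lower coefficients have been cleaned out, the affine function of $\beta_n$ at order $\gamma_n^{m-1}$ has zero constant term and slope a nonzero multiple of $B$, so the pinned value would have to be $\beta^{*}=0$ --- which the lower bound $\sigma>\frac{1}{4}$ built into the definition of $Z_V$ excludes. That lower bound on $\beta_n$, not any non-concentration property of the $c$-points, is the ingredient your descent is missing, and without it the argument does not close.
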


\begin{proof}
Consider the auxiliary function $F(z)$ in \eqref{Eq-1}; since $f,\zeta$ share $a,b$ CM except possibly at finitely many points and $f$ has finite order, there exists a polynomial $P$ and a rational function $R$ such that
\begin{equation}\label{Eq-34}
F=\frac{\zeta-a}{f-a}\cdot\frac{f-b}{\zeta-b}=R\hspace{0.2mm}e^P.
\end{equation}

We claim $\deg P=0$.
Otherwise, put $\displaystyle{P(s)=a_ms^m+a_{m-1}s^{m-1}+\cdots+a_0}$ with $m\cdot a_m\neq0$, where $a_j=\alpha_j+\Ic\tilde{\alpha}_j\in\C$ with real numbers $\alpha_j,\tilde{\alpha}_j$ for $j=0,1,\ldots,m$.
Then, one has
\begin{equation}\label{Eq-35}
\begin{aligned}
&\Re\hspace{0.2mm}\big(P\big(s=re^{\Ic\theta}\big)\big)=\bre{\alpha_m\cos(m\hspace{0.2mm}\theta)-\tilde{\alpha}_m\sin(m\hspace{0.2mm}\theta)}r^m\\
&\,+\bre{\alpha_{m-1}\cos((m-1)\theta)-\tilde{\alpha}_{m-1}\sin((m-1)\theta)}r^{m-1}+\cdots+\alpha_0.
\end{aligned}
\end{equation}

By Proposition \ref{Pr1}, $\zeta-c$ has infinitely many zeros $\displaystyle{\varpi_n=r_ne^{\Ic\theta_n}}$ in the vertical strip $Z_V$.
We assume without loss of generality $f(\varpi_n)=c$, since $\displaystyle{\zeta^{-1}(c)\subseteq f^{-1}(c)}$ except possibly at finitely many points.
Substitute $\varpi_n$ into \eqref{Eq-34} and take absolute value to observe
\begin{equation}\label{Eq-36}
\begin{aligned}
1\equiv&\,\n{F(\varpi_n)}=\n{R(\varpi_n)}e^{\Re\hspace{0.2mm}(P(\varpi_n))}
=\n{R(\varpi_n)}e^{\bre{\alpha_m\cos(m\hspace{0.2mm}\theta_n)-\tilde{\alpha}_m\sin(m\hspace{0.2mm}\theta_n)}r_n^m}\\
&\,\times e^{\bre{\alpha_{m-1}\cos((m-1)\theta_n)-\tilde{\alpha}_{m-1}\sin((m-1)\theta_n)}r_n^{m-1}+\cdots+\alpha_0}.
\end{aligned}
\end{equation}
Noticing the special form of $Z_V$, one may assume that $r_n\to+\infty$ and $\theta_n\to\frac{\pi}{2}$ when $n\to+\infty$.
So, \eqref{Eq-36} further leads to
\begin{equation}\label{Eq-37}
\alpha_m\cos\Big(m\hspace{0.2mm}\frac{\pi}{2}\Big)-\tilde{\alpha}_m\sin\Big(m\hspace{0.2mm}\frac{\pi}{2}\Big)=0.
\end{equation}
On the other hand, $\zeta-c$ has infinitely many zeros $\displaystyle{\omega_n=\tilde{r}_ne^{\Ic\tilde{\theta}_n}}$ in the horizontal strip $Z_H$.
We without loss of generality assume $f(\omega_n)=c$, so that one similarly derives
\begin{equation}\label{Eq-38}
\begin{aligned}
1\equiv&\,\n{F(\omega_n)}=\n{R(\omega_n)}e^{\Re\hspace{0.2mm}(P(\omega_n))}
=\n{R(\omega_n)}e^{\bre{\alpha_m\cos(m\hspace{0.2mm}\tilde{\theta}_n)-\tilde{\alpha}_m\sin(m\hspace{0.2mm}\tilde{\theta}_n)}\tilde{r}_n^m}\\
&\,\times e^{\bre{\alpha_{m-1}\cos((m-1)\tilde{\theta}_n)-\tilde{\alpha}_{m-1}\sin((m-1)\tilde{\theta}_n)}\tilde{r}_n^{m-1}+\cdots+\alpha_0}.
\end{aligned}
\end{equation}
Noticing the special form of $Z_H$, one may assume that $\tilde{r}_n\to+\infty$ and $\tilde{\theta}_n\to\pi$ when $n\to+\infty$.
So, \eqref{Eq-38} further leads to
\begin{equation}\label{Eq-39}
\alpha_m\cos(m\hspace{0.2mm}\pi)-\tilde{\alpha}_m\sin(m\hspace{0.2mm}\pi)=0.
\end{equation}
\eqref{Eq-37} and \eqref{Eq-39} combined yields that $\alpha_m=0$ and $m\geq2$ is an even integer.

The same argument as conducted above implies that for $\varpi_n\in Z_V$, one has
\begin{equation}\label{Eq-40}
\alpha_{m-1}\cos\Big((m-1)\frac{\pi}{2}\Big)-\tilde{\alpha}_{m-1}\sin\Big((m-1)\frac{\pi}{2}\Big)=0,
\end{equation}
while for $\omega_n\in Z_H$, one has
\begin{equation}\label{Eq-41}
\alpha_{m-1}\cos((m-1)\pi)-\tilde{\alpha}_{m-1}\sin((m-1)\pi)=0.
\end{equation}
\eqref{Eq-40} and \eqref{Eq-41} combined shows $\alpha_{m-1}=\tilde{\alpha}_{m-1}=0$ for the odd integer $m-1$.

Using induction for the analyses as described in \eqref{Eq-36}-\eqref{Eq-41} leads to
\begin{equation}\label{Eq-42}
\begin{aligned}
\alpha_j\cos\Big(j\hspace{0.2mm}\frac{\pi}{2}\Big)&-\tilde{\alpha}_j\sin\Big(j\hspace{0.2mm}\frac{\pi}{2}\Big)=0,\\ \\
\alpha_j\cos(j\hspace{0.2mm}\pi)&-\tilde{\alpha}_j\sin(j\hspace{0.2mm}\pi)=0,
\end{aligned}
\end{equation}
for $j=1,2,\ldots,m$.
As a result, it follows that $\alpha_j=0$ for $j=1,2,\ldots,m$ and $\tilde{\alpha}_1=\tilde{\alpha}_3=\cdots=\tilde{\alpha}_{m-3}=\tilde{\alpha}_{m-1}=0$.
Hence, we can rewrite $P(s)$ as
\begin{equation}
P(s)=\Ic\tilde{\alpha}_ms^m+\Ic\tilde{\alpha}_{m-2}s^{m-2}+\cdots+\Ic\tilde{\alpha}_2s^2+a_0.\nonumber
\end{equation}

Write $\varpi_n=\beta_n+\Ic\gamma_n\in Z_V$ for real $\beta_n,\gamma_n$.
Then, one may assume that $\beta_n\to\beta_0\in\br{\frac{1}{4},\frac{3}{4}}$ and $\gamma_n\to+\infty$ as $n\to+\infty$.
Thus, recalling $m\geq2$ is an even integer, one has
\begin{equation}\label{Eq-43}
\begin{aligned}
1\equiv&\,\n{F(\varpi_n)}=\n{R(\varpi_n)}e^{\Re\hspace{0.2mm}(P(\varpi_n))}\\
=&\,\n{R(\varpi_n)}e^{\Re\hspace{0.2mm}(\Ic\tilde{\alpha}_m(\beta_n+\Ic\gamma_n)^m+\Ic\tilde{\alpha}_{m-2}(\beta_n+\Ic\gamma_n)^{m-2}+\cdots+a_0)}\\
=&\,\n{R(\varpi_n)}e^{\pr{m(-1)^{\frac{m}{2}}\tilde{\alpha}_m\beta_n\gamma_n^{m-1}+O(\gamma_n^{m-2})}},
\end{aligned}
\end{equation}
which further implies $m(-1)^{\frac{m}{2}}\tilde{\alpha}_m\beta_0=0$ that is impossible because $m\cdot a_m\neq0$ yet $\alpha_m=0$ (so $\tilde{\alpha}_m\neq0$) and $\beta_0\in\br{\frac{1}{4},\frac{3}{4}}$.
This contradiction can be interpreted as saying that $a_m=a_{m-1}=\cdots=a_1=0$; that is, $P=a_0$ or $\deg P=0$.
As a consequence, we have
\begin{equation}\label{Eq-44}
F=\frac{\zeta-a}{f-a}\cdot\frac{f-b}{\zeta-b}=R.
\end{equation}
As $\displaystyle{\zeta^{-1}(c)\subseteq f^{-1}(c)}$ except possibly at finitely many points, and $\zeta-c=0$ has infinitely many roots, we must have $R(s)\equiv1$ noticing $R$ is rational.
It then follows that $f=\zeta$.
\end{proof}

\begin{corollary}\label{Co2}
Let $a\neq c\in\C$ be finite, and let $f$ be a finite order meromorphic function in $\C$ having only finitely many poles.
When $f,\zeta$ share $a$ counting multiplicity, with $\displaystyle{\zeta^{-1}(c)\subseteq f^{-1}(c)}$ ignoring multiplicity, except possibly at finitely many points, then $f=\zeta$.
\end{corollary}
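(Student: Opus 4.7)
The plan is to mirror the proof of Theorem \ref{Th6}, with the auxiliary function adapted to the one--value setting of Theorem \ref{Th2}. Specifically, I would introduce
\begin{equation}
F_1 := \frac{\zeta - a}{f - a}. \nonumber
\end{equation}
Since $\zeta$ has its unique simple pole at $s = 1$, the hypothesis that $f$ has only finitely many poles and that $f, \zeta$ share the value $a$ CM outside a finite set forces $F_1$ to be meromorphic in $\C$ with only finitely many zeros and poles. The assumption that $f$ (hence $F_1$) has finite order then allows me to invoke the Hadamard factorization theorem to write
\begin{equation}
F_1 = \frac{\zeta - a}{f - a} = R\, e^P, \nonumber
\end{equation}
for some rational function $R$ and some polynomial $P$.

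Next I would argue, exactly as in Theorem \ref{Th6}, that $\deg P = 0$. The input here is Proposition \ref{Pr1}: the equation $\zeta - c = 0$ has infinitely many roots $\varpi_n = \beta_n + \Ic \gamma_n$ in the vertical strip $Z_V$ (so $\beta_n$ bounded, $\gamma_n \to +\infty$) and infinitely many roots $\omega_n = \mu_n + \Ic \nu_n$ in the horizontal strip $Z_H$ (so $\nu_n$ bounded, $\mu_n \to -\infty$). Since $\zeta^{-1}(c) \subseteq f^{-1}(c)$ outside a finite set, each such root $\varpi_n$ or $\omega_n$ (for $n$ large) satisfies $f(\cdot) = c$, whence
\begin{equation}
1 \equiv |F_1(\varpi_n)| = |R(\varpi_n)|\, e^{\Re(P(\varpi_n))}, \qquad 1 \equiv |F_1(\omega_n)| = |R(\omega_n)|\, e^{\Re(P(\omega_n))}. \nonumber
\end{equation}
Writing $P(s) = a_m s^m + \cdots + a_0$ with $a_j = \alpha_j + \Ic \tilde{\alpha}_j$ and inserting $\varpi_n = r_n e^{\Ic \theta_n}$ with $\theta_n \to \pi/2$ and $\omega_n = \tilde{r}_n e^{\Ic \tilde{\theta}_n}$ with $\tilde{\theta}_n \to \pi$, the identical downward induction on the coefficients used in Theorem \ref{Th6} yields $\alpha_j = 0$ for $j = 1, \ldots, m$ and $\tilde{\alpha}_j = 0$ for all odd $j \leq m - 1$, so that $P(s) = \Ic\tilde{\alpha}_m s^m + \Ic\tilde{\alpha}_{m-2} s^{m-2} + \cdots + \Ic\tilde{\alpha}_2 s^2 + a_0$ with $m \geq 2$ even. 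Finally, plugging $\varpi_n = \beta_n + \Ic \gamma_n$ with $\beta_n \to \beta_0 \in [\tfrac14, \tfrac34]$ back into the relation and expanding $\Re(P(\varpi_n))$ produces the dominant term $m(-1)^{m/2} \tilde{\alpha}_m \beta_n \gamma_n^{m-1}$, which is incompatible with $|F_1(\varpi_n)| \equiv 1$ because $\tilde{\alpha}_m \neq 0$ and $\beta_0 > 0$. This contradiction forces $\deg P = 0$.

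With $P \equiv a_0$ constant, $F_1 = R\, e^{a_0}$ is a rational function. Since $\zeta - c = 0$ has infinitely many zeros, each of which (outside a finite set) is also a zero of $f - c$ and hence makes $F_1 \cdot (f - a) - (\zeta - a) = 0$ collapse to the single equation $R\, e^{a_0} \equiv 1$, we conclude that the rational function $R\, e^{a_0}$ must be the constant $1$. Therefore $\zeta - a = f - a$, i.e., $f = \zeta$.

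The main obstacle is, as in Theorem \ref{Th6}, the step of forcing $\deg P = 0$ via the two families of $c$--points in $Z_V$ and $Z_H$; fortunately, that argument transfers verbatim because it depends only on the form $R\, e^P$ and on Proposition \ref{Pr1}, not on the specific numerator/denominator structure of the auxiliary function. Once $P$ is constant, the one--value shared--CM setup is actually simpler than the two--value setup, so no additional work is needed beyond invoking the infinite set of $c$--points.
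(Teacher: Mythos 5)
Your proposal is correct and follows essentially the same route as the paper: the paper's own proof likewise forms $F_1=(\zeta-a)/(f-a)=R\,e^{P}$ and then states that the analyses of Theorem \ref{Th6} carry over verbatim to force $P$ constant and $R\equiv1$. Your added remarks on why $F_1$ has only finitely many zeros and poles and why the $\deg P=0$ argument transfers are accurate and consistent with the paper's treatment.
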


\begin{proof}
Consider the auxiliary function $F_1(z)$ in \eqref{Eq-10}.
Then, one has
\begin{equation}\label{Eq-45}
F_1=\frac{\zeta-a}{f-a}=R\hspace{0.2mm}e^P.
\end{equation}
Here, $P$ is a polynomial and $R$ is a rational function.

Next, one can exploit exactly the same analyses as described in the proof of Theorem \ref{Th6} to deduce that $P$ is simply a constant.
We then have
\begin{equation}\label{Eq-46}
F_1=\frac{\zeta-a}{f-a}=R.
\end{equation}
As $\displaystyle{\zeta^{-1}(c)\subseteq f^{-1}(c)}$ except possibly at finitely many points, and $\zeta-c=0$ has infinitely many roots, we must have $R(s)\equiv1$ noticing $R$ is rational.
It then follows that $f=\zeta$.
\end{proof}

\end{document}